\newcommand{\ra}{\rightarrow}
\newcommand\supp{\mathrm{supp}}
\newcommand\Z{\mathbb Z}
\newcommand\R{\mathbb R}
\newtheorem{thm}{Theorem}[section]
\newtheorem{prop}[thm]{Proposition}
\newtheorem{lem}[thm]{Lemma}
\newtheorem{cor}[thm]{Corollary}
\newtheorem{conjecture}[thm]{Conjecture}
\theoremstyle{definition}
\newtheorem{defn}[thm]{Definition}
\newtheorem{example}[thm]{Example}
\begin{document}

\tikzset{->-/.style={decoration={
  markings,
  mark=at position #1 with {\arrow{>}}},postaction={decorate}}}

\title
{On a theorem of Avez}

\author{Murray Elder}
\address{School of Mathematical and Physical Sciences,
 University of Technology Sydney,
Ultimo NSW 2007, Australia}
\email{Murray.Elder@uts.edu.au}

\author{Cameron Rogers}
\address{Launceston Church Grammar School, Mowbray Heights, Tasmania 7248, Australia}
\email{cameron.m.rogers@gmail.com}

\keywords{finitely generated group; amenable group; symmetric random walk on a group;   
amenable radical; elliptic radical} 
\subjclass[2010]{20F65, 	43A07, 60B15} 
\date{\today}
\thanks{Research supported by Australian Research Council grant  FT110100178}

\begin{abstract}
For each symmetric, aperiodic  probability measure $\mu$ on 
a finitely generated group $G$, 
we define a subset 
$A_{\mu}$ consisting of  group elements $g$  for which the limit of the ratio 
${\mu^{\ast n}(g)}/{\mu^{\ast n}(e)}$ tends to $1$.
We prove that $A_\mu$ is
 a subgroup, is amenable, 
  contains every finite normal subgroup,
and  $G=A_\mu$ if and only if $G$ is amenable. 
  For non-amenable groups
we show that $A_\mu$ is not always a  normal subgroup, and can depend on the measure. We formulate some conjectures relating $A_\mu$ to the amenable radical. 
\end{abstract}

\maketitle

\section{introduction}
Let $\mu$ be a symmetric aperiodic probability measure $\mu$ on a finitely generated   group $G$ whose support generates $G$. Let $e$ denote the identity element of $G$ and $\mu^{\ast n}$ denote the $n$-fold convolution of the measure, so that $\mu^{\ast n}(g)$ is the probability that an $n$-step random walk induced by $\mu$ starting at $e$ ends at $g$. 
Avez \cite{Avez} showed that when $G$ is amenable, 
 \begin{equation}\label{eqn:avez}\lim_{n\ra \infty} \frac{\mu^{\ast n}(g)}{\mu^{\ast n}(e)}=1\end{equation}  for all $g\in G$.

In this paper we extend Avez' result in the following way. For an arbitrary finitely generated group $G$, 
we consider the   set, which we call $A_\mu$, of all $g\in G$ for which 
for which the limit of the ratio ${\mu^{\ast n}(g)}/{\mu^{\ast n}(e)}$ tends to $1$. Avez' result says if $G$ is amenable then $A_\mu=G$.
We prove that when $G$ is non-amenable, $A_\mu$ is a proper, amenable subgroup.
Moreover, $A_\mu$ contains every finite normal subgroup, so contains the  elliptic radical (the largest normal, locally finite subgroup of $G$), and so is non-trivial in many cases. We compute $A_\mu$ for some examples and show that in general it is not a normal subgroup and may depend on the measure.
We close by 
formulating some conjectures relating $A_\mu$ to the amenable radical.

 This work is part of PhD work of the second author \cite{CamPhD}, more details and applications can be found therein. Other relevant work which motivates the present paper 
  includes \cite{PropertyT, Erschler, 
Gournay, KaimV, PittetSaloffCoste, AnnesNotes, Woess}.

\section{Preliminaries}
In this article $\Z_+$ denotes the positive integers.
Recall that a probability measure $\mu$  on a group $G$ is {\em symmetric} if $\mu(x)=\mu(x^{-1})$ for all $x\in G$. The {\em support} of  $\mu$ is the set $\{x\in G\mid \mu(x)>0\}$ which we denote by $\supp(\mu)$.
The {\em convolution}, $\mu\ast \tau$, of two measures $\mu,\tau$ on a discrete group is  $\mu\ast\tau(y)=\sum_{x\in G} \mu(x)\tau(x^{-1}y).$
The distribution of a $n$-step random walk induced by $\mu$ is 
the $n$-fold convolution power of $\mu$, which we denote by $\mu^{\ast n}$.
The {\em period} of a measure $\mu$ is $\gcd\lbrace  n\in\Z_+\;|\;\mu^{\ast n}(e)>0\rbrace$.
The measure $\mu$ is said to be {\em aperiodic} if it has period 1.
Note that for a symmetric measure the period can only take values 1 or 2.

A function  $\zeta:G\ra \R$ on a finitely generated group $G$ is an {\em $\ell^2$-function}, or $\zeta\in \ell^2(G)$,  if $\sum_{g\in G} \left|\zeta(g)\right|^2$ is finite. The corresponding    inner product is $\langle \zeta,\iota \rangle_2=\sum_{g\in G} \zeta(g)\iota(g)$ and norm is $\left\Vert \zeta\right\Vert_2=\sqrt{ \langle\zeta,\zeta\rangle}$, as usual.
The
action of the group $G$ on $\ell^2(G)$ defined by
$g\cdot\zeta(x)=\zeta(g^{-1}x)$ for all $x\in G$ 
 is  called the {\em left regular  representation} of the group.

Observe that 
\begin{equation}\label{eqn:2ng}
\begin{split}
\mu^{\ast 2n}(g) = \sum_{x\in G} \mu^{\ast n}(x)\mu^{\ast n}(x^{-1}g)
= \sum_{x\in G} \mu^{\ast n}(x)\mu^{\ast n}(g^{-1}x)\\
= \sum_{x\in G} \mu^{\ast n}(x)\left(g\cdot \mu^{\ast n}(x)\right)
=\langle \mu^{\ast n},g\cdot \mu^{\ast n}\rangle\end{split}\end{equation}
and so $\mu^{\ast 2n}(e)=\langle \mu^{\ast n},\mu^{\ast n}\rangle=\left\Vert \mu^{\ast n}\right\Vert_2^2$.

The notion of amenability has many characterisations. Here we use the following.
\begin{thm}[\cite{Reiter, Day}] \label{thm:Reiter} $G$ is amenable if and only if there is a sequence $f_n$ of probability measures on $G$ such that  $\left\Vert g\cdot f_n-f_n \right\Vert_2\rightarrow 0$ for every $g\in G$.\end{thm}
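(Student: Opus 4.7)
This is Reiter's characterisation of amenability, and I would prove the two implications separately, using F\o lner's equivalent characterisation of amenability as the bridge.

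$(\Rightarrow)$ Suppose $G$ is amenable, and fix a F\o lner sequence $(F_n)_{n\in \N}$: finite subsets of $G$ with $|gF_n\triangle F_n|/|F_n|\ra 0$ for every $g\in G$ (with $|F_n|\ra\infty$ if $G$ is infinite; otherwise take $F_n=G$). Define $f_n=\mathbf{1}_{F_n}/|F_n|$, a probability measure supported on $F_n$. A direct calculation in the spirit of \eqref{eqn:2ng} gives
\[
\left\Vert g\cdot f_n-f_n\right\Vert_2^2
=\sum_{x\in G}\bigl(f_n(g^{-1}x)-f_n(x)\bigr)^2
=\frac{|gF_n\triangle F_n|}{|F_n|^2},
\]
which is bounded above by $|gF_n\triangle F_n|/|F_n|$ and so tends to $0$.

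$(\Leftarrow)$ Conversely, given a sequence of probability measures $f_n$ with $\|g\cdot f_n-f_n\|_2\ra 0$ for every $g\in G$, the strategy is to build an invariant mean on $\ell^\infty(G)$. Each $f_n$ induces a state $\phi_n\in\ell^\infty(G)^*$ by $\phi_n(F)=\sum_{x\in G} f_n(x)F(x)$, and by Banach--Alaoglu some weak-$*$ subnet converges to a state $\phi$. Left-invariance of $\phi$ would follow from the estimate $|\phi_n(g\cdot F)-\phi_n(F)|\leq \|F\|_\infty\cdot\|g\cdot f_n-f_n\|_1$, so the task reduces to upgrading the $\ell^2$-almost-invariance hypothesis to an $\ell^1$-almost-invariance statement. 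The standard manoeuvre is to pass to the $\ell^2$-unit vector $h_n=\sqrt{f_n}$ and exploit the pointwise factorisation $|a-b|=|\sqrt a-\sqrt b|(\sqrt a+\sqrt b)$, which combined with Cauchy--Schwarz delivers the required transfer of norms.

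\textbf{Main obstacle.} The hard direction is $(\Leftarrow)$: positivity and $\ell^1$-normalisation of the $f_n$ are essential, since without them $\ell^2$-almost-invariance of a probability measure is genuinely weaker than F\o lner-type almost-invariance (for instance, uniform measures on balls in a free group already satisfy $\|g\cdot f_n-f_n\|_2\ra 0$). The cleanest way to overcome this is to route the argument through Hulanicki's theorem, equating amenability of $G$ with weak containment of the trivial representation in the left regular representation; the almost-invariant $\ell^2$-vectors $h_n=\sqrt{f_n}$ then witness exactly that weak containment. In a self-contained write-up I would either spell out the square-root trick carefully or invoke the Hulanicki--Reiter framework, as the paper does by citing \cite{Reiter, Day}.
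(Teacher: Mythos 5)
The paper does not actually prove this statement --- it is quoted from Reiter and Day --- so your attempt has to stand on its own, and there is a genuine problem in the $(\Leftarrow)$ direction. Your forward direction is fine: for $f_n=\mathbf{1}_{F_n}/|F_n|$ one indeed gets $\left\Vert g\cdot f_n-f_n\right\Vert_2^2=|gF_n\triangle F_n|/|F_n|^2$. But look at what that same computation tells you: since $|gF_n\triangle F_n|\le 2|F_n|$ always, any sequence of finite sets with $|F_n|\to\infty$ (F\o lner or not) yields $\left\Vert g\cdot f_n-f_n\right\Vert_2^2\le 2/|F_n|\to 0$. So, exactly as your own free-group example shows, the condition ``there exist probability measures $f_n$ with $\left\Vert g\cdot f_n-f_n\right\Vert_2\to 0$'' is satisfied by \emph{every} infinite group, and the $(\Leftarrow)$ implication as literally stated is false. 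You half-recognise this (``genuinely weaker''), but your proposed rescue via $h_n=\sqrt{f_n}$ and Hulanicki cannot work: the square-root/Cauchy--Schwarz manoeuvre gives $\left\Vert g\cdot\sqrt{f_n}-\sqrt{f_n}\right\Vert_2^2\le\left\Vert g\cdot f_n-f_n\right\Vert_1$, i.e.\ it converts $\ell^1$-almost-invariance of $f_n$ into $\ell^2$-almost-invariance of $\sqrt{f_n}$; your hypothesis is $\ell^2$-almost-invariance of $f_n$, which controls neither side of that inequality. No argument can close this gap, because the implication you are trying to prove has a counterexample.

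The correct response is to flag that the theorem is misstated rather than to prove it. Reiter's property $(P_2)$ --- which is what the paper actually uses downstream, since $\xi_n=\mu^{\ast n}/\left\Vert\mu^{\ast n}\right\Vert_2$ in Proposition~\ref{prop:fromL2toMassAtVertex} and the almost-invariant vectors in Theorem~\ref{prop:amenablesubgroup} are non-negative $\ell^2$-\emph{unit vectors}, not probability measures --- requires $f_n\ge 0$ with $\left\Vert f_n\right\Vert_2=1$. With that hypothesis your programme goes through, and the norm transfer runs in the correct direction by squaring rather than square-rooting: $f_n=h_n^2$ is a probability measure and, by Cauchy--Schwarz, $\left\Vert g\cdot f_n-f_n\right\Vert_1\le\left\Vert g\cdot h_n-h_n\right\Vert_2\left\Vert g\cdot h_n+h_n\right\Vert_2\le 2\left\Vert g\cdot h_n-h_n\right\Vert_2$, after which your weak-$\ast$ limit of the states $\phi_n$ does produce an invariant mean.
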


\section{Defining $A_\mu$}

\begin{defn}\label{defn:Amu}
Let $G$ be a finitely generated group and let $\mu$ be a symmetric, aperiodic probability measure on $G$ whose support generates $G$. 
We define
\begin{align*}A_{G,\mu} &=\left\{g\in G \mid \lim_{n\ra \infty} \frac{\mu^{\ast n}(g)}{\mu^{\ast n}(e)}=1\right\}.\end{align*}   
When it is  understood which group 
 is being used the 
set will be referred to as $A_\mu$.
\end{defn}

The definition is clearly motivated by Avez' result: when $G$ is amenable we have $A_\mu=G$. 
A similar construction based on Theorem~\ref{thm:Reiter} would be the set  of all $g\in G$ for which $\left\Vert g\cdot f_n-f_n \right\Vert_2$ tends to 0 with respect to some fixed sequence $f_n$ of probability measures on $G$. 
An obvious choice for such a sequence would be $\xi_n=\frac{\mu^{\ast n}}{\Vert \mu^{\ast n} \Vert_2}$. It turns out that this construction  coincides with $A_\mu$.

\begin{prop}\label{prop:fromL2toMassAtVertex}
Let $G$ be a finitely generated group, and $\mu$ a symmetric, aperiodic probability measure on $G$ whose support generates $G$. 
Then
$$A_{\mu} =\left\{g\in G\mid \left\Vert g\cdot \xi_n-\xi_n\right\Vert_2\ra 0
\right\}.$$
\end{prop}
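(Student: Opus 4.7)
The plan is to reduce $\|g\cdot\xi_n-\xi_n\|_2$ to a convolution ratio via a direct $\ell^2$-computation. Since $\xi_n$ is a unit vector, expanding the norm yields
\[
\|g\cdot\xi_n-\xi_n\|_2^2 \;=\; 2 - 2\langle g\cdot\xi_n,\xi_n\rangle_2.
\]
By equation~\eqref{eqn:2ng}, $\langle g\cdot\mu^{\ast n},\mu^{\ast n}\rangle_2 = \mu^{\ast 2n}(g)$ and $\|\mu^{\ast n}\|_2^2 = \mu^{\ast 2n}(e)$, so after substituting $\xi_n = \mu^{\ast n}/\|\mu^{\ast n}\|_2$ we obtain the key identity
\[
\|g\cdot\xi_n-\xi_n\|_2^2 \;=\; 2\left(1 - \frac{\mu^{\ast 2n}(g)}{\mu^{\ast 2n}(e)}\right).
\]
In particular $\|g\cdot\xi_n-\xi_n\|_2 \to 0$ if and only if the even-index ratio $\mu^{\ast 2n}(g)/\mu^{\ast 2n}(e)$ tends to $1$.

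The inclusion $A_\mu\subseteq\{g:\|g\cdot\xi_n-\xi_n\|_2\to 0\}$ is now immediate: if $\mu^{\ast n}(g)/\mu^{\ast n}(e)\to 1$ then the same holds along the even subsequence, and the identity above gives the conclusion.

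For the reverse inclusion one must promote even-index convergence of the ratio to full-sequence convergence, and this is where aperiodicity enters essentially; this step is the main obstacle. I would apply Cauchy--Schwarz to the symmetry-based identity
\[
\mu^{\ast(2n+1)}(g)-\mu^{\ast(2n+1)}(e) \;=\; \langle \mu^{\ast n},\, g\cdot\mu^{\ast(n+1)} - \mu^{\ast(n+1)}\rangle_2,
\]
combined with the norm computation above applied at level $n+1$, to derive
\[
\left(\frac{\mu^{\ast(2n+1)}(g)}{\mu^{\ast(2n+1)}(e)} - 1\right)^2 \;\leq\; \frac{2\,\mu^{\ast 2n}(e)\,\mu^{\ast(2n+2)}(e)}{\mu^{\ast(2n+1)}(e)^2}\left(1 - \frac{\mu^{\ast(2n+2)}(g)}{\mu^{\ast(2n+2)}(e)}\right).
\]
The rightmost factor vanishes by hypothesis, so the conclusion reduces to boundedness of the prefactor. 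By Cauchy--Schwarz the prefactor is always at least $2$; boundedness above follows from the classical ratio-limit theorem $\mu^{\ast(n+1)}(e)/\mu^{\ast n}(e)\to\rho(\mu)$ for aperiodic symmetric random walks, which is equivalent to $-\rho(\mu)$ lying outside the spectrum of the Markov operator associated with $\mu$. Granted this, the odd-indexed ratios also tend to $1$, and therefore $g\in A_\mu$.
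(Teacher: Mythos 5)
Your key identity $\left\Vert g\cdot\xi_n-\xi_n\right\Vert_2^2=2\bigl(1-\mu^{\ast 2n}(g)/\mu^{\ast 2n}(e)\bigr)$ and the resulting equivalence with convergence of the \emph{even} ratios is exactly the paper's proof; the paper stops there, silently identifying even-index convergence with membership in $A_\mu$ (the upgrade is really the $k=2$ case of Lemma~\ref{lem:kpower}, which the paper states without proof). You are right to flag the even-to-odd step as the genuine content of the reverse inclusion, and your Cauchy--Schwarz inequality
$$\left(\frac{\mu^{\ast(2n+1)}(g)}{\mu^{\ast(2n+1)}(e)}-1\right)^2\leq\frac{2\,\mu^{\ast 2n}(e)\,\mu^{\ast(2n+2)}(e)}{\mu^{\ast(2n+1)}(e)^2}\left(1-\frac{\mu^{\ast(2n+2)}(g)}{\mu^{\ast(2n+2)}(e)}\right)$$
is correct and does reduce everything to boundedness of the prefactor.

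The weak point is your justification of that boundedness. The asserted equivalence between the ratio limit theorem and ``$-\rho(\mu)$ lying outside the spectrum'' is not right: for group-invariant operators the support of the spectral measure of $\delta_e$ is the spectrum, and the odd ratios are governed by the \emph{relative} local behaviour of that measure near $-\rho$ versus $+\rho$, not by whether $-\rho$ belongs to the spectrum at all. Moreover the full ratio limit theorem $\mu^{\ast(n+1)}(e)/\mu^{\ast n}(e)\to\rho$ is itself a nontrivial result whose validity for general symmetric aperiodic $\mu$ you should not wave at as ``classical'' without a precise citation. Fortunately you need much less: only $\liminf_n \mu^{\ast(2n+1)}(e)/\mu^{\ast 2n}(e)>0$. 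This follows elementarily. Aperiodicity of the symmetric measure gives an odd $m$ with $\beta:=\mu^{\ast m}(e)>0$; supermultiplicativity $\mu^{\ast(a+b)}(e)\geq\mu^{\ast a}(e)\mu^{\ast b}(e)$ gives $\mu^{\ast(2n+1)}(e)\geq\beta\,\mu^{\ast(2n+1-m)}(e)$ with $2n+1-m$ even; and the standard monotone limit $\mu^{\ast(2n+2)}(e)/\mu^{\ast 2n}(e)\nearrow\rho^2$ shows $\mu^{\ast(2n+1-m)}(e)$ is comparable to $\mu^{\ast 2n}(e)$ up to a factor tending to $\rho^{m-1}$. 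With that substitution your argument closes, and it is a more careful proof than the one in the paper.
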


\begin{proof} 
By  Equation~(\ref{eqn:2ng}) we have 
\begin{align*}
\frac{\mu^{\ast 2n}(g)}{\mu^{\ast 2n}(e)} &=\frac{\langle \mu^{\ast n},g\cdot\mu^{\ast n}\rangle}{\left\Vert \mu^{\ast n}\right\Vert_2^2}
=\langle \xi_n,g\cdot\xi_n\rangle.
\end{align*}
Observe that
\begin{align*}\left\Vert g\cdot\xi_n-\xi_n\right\Vert_2^2 &=\sum_{x\in G} (g\cdot\xi_n-\xi_n)^2(x)\\
&=\sum_{x\in G} (g\cdot\xi_n)^2(x)-2\sum_{x\in G} (g\cdot\xi_n)(x)\xi_n(x)+\sum_{x\in G} (\xi_n)^2(x)\\
&=\left\Vert g\cdot\xi_n\right\Vert^2_2-2\langle g\cdot\xi_n,\xi_n\rangle+\left\Vert \xi_n\right\Vert_2^2\\
&=2-2\langle g\cdot\xi_n,\xi_n\rangle\end{align*}
since  $ \xi_n, g\cdot \xi_n$ are unit vectors.
Thus $\left\Vert g\cdot\xi_n-\xi_n\right\Vert_2$ approaches 0 if and only if  $\langle g\cdot\xi_n,\xi_n\rangle=\frac{\mu^{\ast 2n}(g)}{\mu^{\ast 2n}(e)} $ approaches 1. 
\end{proof}
\begin{cor}
$G$ is amenable if and only if $G=A_\mu$.
\end{cor}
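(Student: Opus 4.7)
The plan is to prove the two directions separately, essentially by assembling the preceding results. The ``if'' direction is Avez's theorem, already recalled in Equation~\eqref{eqn:avez}: if $G$ is amenable then $\mu^{\ast n}(g)/\mu^{\ast n}(e)\to 1$ for every $g\in G$, which says exactly $A_\mu = G$.

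For the ``only if'' direction, I would argue as follows. Suppose $G=A_\mu$. Proposition~\ref{prop:fromL2toMassAtVertex} then delivers $\left\Vert g\cdot\xi_n-\xi_n\right\Vert_2\to 0$ for every $g\in G$, where $\xi_n = \mu^{\ast n}/\left\Vert\mu^{\ast n}\right\Vert_2$. These $\xi_n$ are non-negative unit vectors in $\ell^2(G)$ but are not probability measures, so they are not quite in the form required by Theorem~\ref{thm:Reiter}. I would sidestep this by setting $f_n(x):=\xi_n(x)^2$: then $f_n\ge 0$ and $\sum_{x\in G} f_n(x)=\left\Vert\xi_n\right\Vert_2^2=1$, so each $f_n$ is a probability measure.

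To verify the $f_n$ are almost invariant, I would factor $a^2-b^2=(a-b)(a+b)$ pointwise and use that $\xi_n(x)^2\le\left\Vert\xi_n\right\Vert_2^2=1$, so $\xi_n,\,g\cdot\xi_n\le 1$ everywhere and hence $\left\Vert g\cdot\xi_n+\xi_n\right\Vert_\infty\le 2$. This yields
\[
\left\Vert g\cdot f_n - f_n\right\Vert_2
\;\le\; \left\Vert g\cdot\xi_n+\xi_n\right\Vert_\infty \cdot \left\Vert g\cdot\xi_n-\xi_n\right\Vert_2
\;\le\; 2\left\Vert g\cdot\xi_n-\xi_n\right\Vert_2
\;\longrightarrow\; 0,
\]
so Theorem~\ref{thm:Reiter} concludes that $G$ is amenable.

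The only non-routine step is the passage from the $\ell^2$-normalised vectors $\xi_n$ to genuine probability measures, dealt with by the squaring trick above; everything else is a direct application of Avez's theorem, Proposition~\ref{prop:fromL2toMassAtVertex}, and Theorem~\ref{thm:Reiter}.
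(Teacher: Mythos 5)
Your proof is correct and follows essentially the same route the paper intends: the forward direction is Avez's theorem (Equation~\eqref{eqn:avez}), and the converse combines Proposition~\ref{prop:fromL2toMassAtVertex} with Theorem~\ref{thm:Reiter}, which is exactly the paper's one-line argument. Your squaring step $f_n=\xi_n^2$ is a sensible patch for the mismatch between the $\ell^2$-unit vectors $\xi_n$ and the ``probability measures'' demanded by Theorem~\ref{thm:Reiter}; in fact it is the safer move, because the theorem as literally printed (probability measures with $\Vert g\cdot f_n-f_n\Vert_2\to 0$) is satisfied vacuously by uniform measures on large balls in any group, whereas your $f_n$ also satisfy the genuine $\ell^1$ Reiter condition via Cauchy--Schwarz: $\Vert g\cdot f_n-f_n\Vert_1\le\Vert g\cdot\xi_n+\xi_n\Vert_2\,\Vert g\cdot\xi_n-\xi_n\Vert_2\le 2\Vert g\cdot\xi_n-\xi_n\Vert_2\to 0$.
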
\begin{proof}Follows immediately from 
Theorem~\ref{thm:Reiter} and Proposition~\ref{prop:fromL2toMassAtVertex}.
\end{proof}

The following observation will be useful. \begin{lem}\label{lem:kpower} Let $\mu$ a symmetric, aperiodic probability measure on $G$ whose support generates $G$. 
For any fixed $k\in \Z_+$ we have $A_\mu=A_{\mu^{\ast k}}$.
\end{lem}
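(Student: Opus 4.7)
The plan is to use Proposition~\ref{prop:fromL2toMassAtVertex} to recast both sets in $\ell^2$ language, and then compare the full sequence $\{\xi_n\}$ to its subsequence $\{\xi_{kn}\}$. Writing $\xi_n=\mu^{\ast n}/\Vert\mu^{\ast n}\Vert_2$, Proposition~\ref{prop:fromL2toMassAtVertex} gives $A_\mu=\{g\in G:\Vert g\cdot\xi_n-\xi_n\Vert_2\to 0\}$. First I would observe that $\mu^{\ast k}$ is itself symmetric, aperiodic, and has support generating $G$ (aperiodicity of $\mu$ forces $\mu^{\ast kn}(e)>0$ for all large $n$), so $A_{\mu^{\ast k}}$ is well defined; since $(\mu^{\ast k})^{\ast n}=\mu^{\ast kn}$, applying the proposition to $\mu^{\ast k}$ yields $A_{\mu^{\ast k}}=\{g\in G:\Vert g\cdot\xi_{kn}-\xi_{kn}\Vert_2\to 0\}$. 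The inclusion $A_\mu\subseteq A_{\mu^{\ast k}}$ is then immediate, since convergence of the full sequence implies convergence of its $k$-subsequence.

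For the reverse inclusion I would exploit the recursion $\mu^{\ast(n+1)}=\mu^{\ast n}\ast\mu$, which rewrites as $\xi_{n+1}=(\xi_n\ast\mu)/\alpha_n$ with $\alpha_n:=\Vert\xi_n\ast\mu\Vert_2$. Two ingredients are needed. First, right-convolution by $\mu$ commutes with the left regular action of $G$, so
\[g\cdot\xi_{n+1}-\xi_{n+1}=\frac{(g\cdot\xi_n-\xi_n)\ast\mu}{\alpha_n}.\]
Second, Young's inequality gives $\Vert\zeta\ast\mu\Vert_2\le\Vert\mu\Vert_1\Vert\zeta\Vert_2=\Vert\zeta\Vert_2$, producing the one-step bound
\[\Vert g\cdot\xi_{n+1}-\xi_{n+1}\Vert_2\le\frac{\Vert g\cdot\xi_n-\xi_n\Vert_2}{\alpha_n}.\]

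The main obstacle is securing a uniform positive lower bound on $\alpha_n$. I would compute
\[\alpha_n^2=\frac{\Vert\mu^{\ast(n+1)}\Vert_2^2}{\Vert\mu^{\ast n}\Vert_2^2}=\frac{\mu^{\ast(2n+2)}(e)}{\mu^{\ast 2n}(e)}=\frac{\sum_{x\in G}\mu^{\ast 2n}(x)\mu^{\ast 2}(x^{-1})}{\mu^{\ast 2n}(e)}\ge\mu^{\ast 2}(e)>0,\]
the last inequality obtained by keeping only the $x=e$ term. Hence $\alpha_n\ge\alpha:=\sqrt{\mu^{\ast 2}(e)}>0$ for every $n$, and iterating the one-step bound $j$ times gives $\Vert g\cdot\xi_{n+j}-\xi_{n+j}\Vert_2\le\alpha^{-j}\Vert g\cdot\xi_n-\xi_n\Vert_2$.

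To finish, suppose $g\in A_{\mu^{\ast k}}$, so $\Vert g\cdot\xi_{km}-\xi_{km}\Vert_2\to 0$ as $m\to\infty$. For each fixed $j\in\{0,1,\dots,k-1\}$ the iterated bound with $n=km$ gives $\Vert g\cdot\xi_{km+j}-\xi_{km+j}\Vert_2\le\alpha^{-j}\Vert g\cdot\xi_{km}-\xi_{km}\Vert_2\to 0$. Since $\{km+j:m\ge 0,\;0\le j<k\}$ partitions the nonnegative integers into finitely many arithmetic progressions, $\Vert g\cdot\xi_n-\xi_n\Vert_2\to 0$, i.e., $g\in A_\mu$. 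Without the lower bound $\alpha_n\ge\alpha>0$ the factor $\alpha^{-j}$ could blow up and the subsequence-to-sequence passage would fail; once that bound is in hand, the rest is a short pigeonhole argument over residues modulo $k$.
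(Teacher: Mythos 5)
Your proof is correct. Note that the paper itself offers no proof of this lemma --- it is stated as an ``observation'' --- so there is no argument to compare yours against; what you have written fills a genuine gap in the exposition. The easy inclusion $A_\mu\subseteq A_{\mu^{\ast k}}$ is exactly the passage to a subsequence after invoking Proposition~\ref{prop:fromL2toMassAtVertex} (and your preliminary check that $\mu^{\ast k}$ again satisfies the standing hypotheses is needed and correct: aperiodicity gives $\mu^{\ast m}(e)>0$ for all large $m$, which handles both the aperiodicity of $\mu^{\ast k}$ and the fact that $\supp(\mu^{\ast k})$ still generates). The substantive content is the reverse inclusion, and your mechanism for it is sound: the identity $g\cdot\xi_{n+1}-\xi_{n+1}=\bigl((g\cdot\xi_n-\xi_n)\ast\mu\bigr)/\alpha_n$, the contraction $\Vert\zeta\ast\mu\Vert_2\le\Vert\zeta\Vert_2$, and crucially the uniform lower bound $\alpha_n^2=\mu^{\ast(2n+2)}(e)/\mu^{\ast 2n}(e)\ge\mu^{\ast 2}(e)>0$, which is what licenses the passage from the subsequence $\{km\}$ back to all of $\Z_+$ with only a bounded loss $\alpha^{-j}$, $0\le j<k$. (Your final step is really just a finite case split over residues mod $k$ rather than a pigeonhole argument, but that is a naming quibble, not a gap.)
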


\section{Algebraic properties of $A_\mu$}

We now show that more than being some peculiar collection of elements, the sets $A_\mu$ have algebraic structure. Throughout this section we consider $G$  a finitely generated group and $\mu$  a symmetric, aperiodic probability measure on $G$ whose support generates $G$,  and 
$\xi_n=\frac{\mu^{\ast n}}{\Vert \mu^{\ast n} \Vert_2}$.

\begin{thm}\label{thm:subgroup}
 $A_\mu$ is a subgroup.
\end{thm}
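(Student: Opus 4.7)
The plan is to invoke Proposition~\ref{prop:fromL2toMassAtVertex} and reformulate membership in $A_\mu$ as the condition $\left\Vert g\cdot\xi_n-\xi_n\right\Vert_2\to 0$. From this reformulation, the subgroup property should follow cleanly from two standard facts: the left regular representation acts by isometries on $\ell^2(G)$ (a one-line change of variables shows $\left\Vert g\cdot\zeta\right\Vert_2=\left\Vert \zeta\right\Vert_2$ for every $g\in G$ and $\zeta\in\ell^2(G)$), and the triangle inequality.

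Identity membership is trivial, since $e\cdot\xi_n=\xi_n$. For closure under inverses, if $g\in A_\mu$ then applying the isometry $g^{-1}$ to the vector $\xi_n-g\cdot\xi_n$ yields
\begin{equation*}
\left\Vert g^{-1}\cdot\xi_n-\xi_n\right\Vert_2 = \left\Vert g^{-1}\cdot(\xi_n-g\cdot\xi_n)\right\Vert_2 = \left\Vert \xi_n-g\cdot\xi_n\right\Vert_2 \longrightarrow 0,
\end{equation*}
so $g^{-1}\in A_\mu$. For closure under multiplication, if $g,h\in A_\mu$ then the triangle inequality, together with the isometry property of $g$, gives
\begin{equation*}
\left\Vert gh\cdot\xi_n-\xi_n\right\Vert_2 \le \left\Vert g\cdot(h\cdot\xi_n-\xi_n)\right\Vert_2 + \left\Vert g\cdot\xi_n-\xi_n\right\Vert_2 = \left\Vert h\cdot\xi_n-\xi_n\right\Vert_2 + \left\Vert g\cdot\xi_n-\xi_n\right\Vert_2,
\end{equation*}
and both terms on the right tend to $0$, hence $gh\in A_\mu$.

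There is really no substantive obstacle here: once Proposition~\ref{prop:fromL2toMassAtVertex} is available, the theorem is an instance of the general principle that the set of almost-invariant vectors for a unitary representation is closed under the group operations. By contrast, proving the subgroup property directly from the ratio definition $\mu^{\ast n}(g)/\mu^{\ast n}(e)\to 1$ would require clumsy manipulation of convolution sums, so the real work is not the proof itself but rather the observation that the $\ell^2$ reformulation is the right language for the problem.
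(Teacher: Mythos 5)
Your proof is correct and follows essentially the same route as the paper's: both pass to the $\ell^2$ reformulation of Proposition~\ref{prop:fromL2toMassAtVertex} and use the decomposition $gh\cdot\xi_n-\xi_n = g\cdot(h\cdot\xi_n-\xi_n)+(g\cdot\xi_n-\xi_n)$ together with translation-invariance of the norm. The only (cosmetic) difference is in closure under inverses, which the paper dispatches by noting $\mu^{\ast n}(g^{-1})=\mu^{\ast n}(g)$ from symmetry of $\mu$, while you apply the isometry $g^{-1}$ to $\xi_n-g\cdot\xi_n$; both are one-line and valid.
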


\begin{proof}   
Let $g,h\in G$. We have 
\begin{align*}
\left\Vert gh\cdot \xi_n-\xi_n \right\Vert_2 
& =  \left\Vert g\cdot (h\cdot \xi_n-\xi_n) + g\cdot \xi_n-\xi_n \right\Vert_2 \\
&\leq
\left\Vert g\cdot (h\cdot \xi_n-\xi_n)\right\Vert_2 
+\left\Vert g\cdot \xi_n-\xi_n \right\Vert_2\\
&=
\left\Vert h\cdot \xi_n-\xi_n\right\Vert_2 
+\left\Vert g\cdot \xi_n-\xi_n \right\Vert_2
\end{align*}
since the $\ell^2$ norm is invariant under translation. Since $g,h\in A_\mu$ the right hand side limits to 0, so $gh\in A_\mu$. Clearly $e\in A_\mu$ and $A_\mu$ is closed under inverses since $\mu$ is symmetric.\end{proof}
In \cite{CamPhD} a slightly stronger statement is given, which gives some structural information about the cosets of $A_\mu$.

\begin{thm}\label{prop:amenablesubgroup}
$A_\mu$ is amenable.\end{thm}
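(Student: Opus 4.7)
The plan is to use Proposition~\ref{prop:fromL2toMassAtVertex} to exhibit almost invariant unit vectors for $A_\mu$ acting on $\ell^2(A_\mu)$ itself, and then convert these into a Reiter sequence of probability measures on $A_\mu$ in order to apply Theorem~\ref{thm:Reiter} to the subgroup $A_\mu$. Setting $H=A_\mu$, partition $G$ into right $H$-cosets $G=\bigsqcup_i Hx_i$; this yields an orthogonal decomposition $\ell^2(G)=\bigoplus_i \ell^2(Hx_i)$ in which each summand is preserved by the left $H$-action and is unitarily $H$-equivariantly isomorphic to $\ell^2(H)$ (carrying the left regular representation) via $f\mapsto (h\mapsto f(hx_i))$. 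Writing $\xi_n=\sum_i \xi_n^{(i)}$ with $\xi_n^{(i)}$ supported on $Hx_i$, we have $\sum_i \|\xi_n^{(i)}\|_2^2=1$ and $\|h\cdot \xi_n-\xi_n\|_2^2=\sum_i \|h\cdot \xi_n^{(i)}-\xi_n^{(i)}\|_2^2$ for every $h\in H$.

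The core step is a pigeonhole/averaging argument. Given a finite $F\subset H$ and $\eps>0$, Proposition~\ref{prop:fromL2toMassAtVertex} lets us choose $n$ large enough that $\sum_{h\in F}\|h\cdot\xi_n-\xi_n\|_2^2<\eps^2$. If for every index $i$ with $\|\xi_n^{(i)}\|_2>0$ we had $\sum_{h\in F}\|h\cdot\xi_n^{(i)}-\xi_n^{(i)}\|_2^2\geq \eps^2\|\xi_n^{(i)}\|_2^2$, summing over $i$ would contradict the choice of $n$, since $\sum_i\|\xi_n^{(i)}\|_2^2=1$. So some index $i_0$ satisfies $\sum_{h\in F}\|h\cdot\xi_n^{(i_0)}-\xi_n^{(i_0)}\|_2^2<\eps^2\|\xi_n^{(i_0)}\|_2^2$; normalizing $\hat\xi=\xi_n^{(i_0)}/\|\xi_n^{(i_0)}\|_2$ and transporting through the isomorphism $\ell^2(Hx_{i_0})\cong \ell^2(H)$ produces a unit vector $\zeta\in\ell^2(H)$ with $\|h\cdot \zeta-\zeta\|_2<\eps$ for every $h\in F$. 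Exhausting $H$ by an increasing sequence of finite sets $F_k$ and taking $\eps_k\to 0$ yields unit vectors $\zeta_k\in\ell^2(H)$ that are asymptotically invariant under every element of $H$.

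Finally, $f_k=\zeta_k^2$ is a probability measure on $H$, and since $\|\zeta_k\|_\infty\leq \|\zeta_k\|_2=1$, the factorization $h\cdot f_k-f_k=(h\cdot \zeta_k-\zeta_k)(h\cdot \zeta_k+\zeta_k)$ together with Cauchy--Schwarz gives $\|h\cdot f_k-f_k\|_2\leq 2\|h\cdot \zeta_k-\zeta_k\|_2\to 0$ for every $h\in H$. Thus $\{f_k\}$ is a Reiter sequence for $H$, and Theorem~\ref{thm:Reiter} applied to the subgroup $H$ gives amenability of $A_\mu$. The main subtlety is the pigeonhole step: it is what allows us to pass from almost invariance of $\xi_n$ in $\ell^2(G)$ to almost invariance in $\ell^2(A_\mu)$ even when the mass of $\xi_n$ on $A_\mu$ itself vanishes as $n\to\infty$ (as it does, for instance, for a free group, where $A_\mu$ is conjecturally trivial). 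The coset decomposition redistributes this mass across parallel copies of $\ell^2(A_\mu)$, and at least one copy must retain enough structure to witness amenability.
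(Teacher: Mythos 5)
Your proof is correct and follows essentially the same route as the paper's: decompose $\xi_n$ over the right cosets of $A_\mu$, run a pigeonhole argument on the coset-wise normalized defect, and transport the winning piece to $\ell^2(A_\mu)$. If anything your write-up is slightly more careful than the paper's, since performing the pigeonhole over a whole finite set $F$ at once (rather than one element $k$ at a time) is what legitimately produces a single coset index working simultaneously for all elements under consideration, and squaring the resulting unit vectors yields genuine probability measures so that Theorem~\ref{thm:Reiter} applies verbatim.
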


The idea of our proof is to give a sequence of probability measures on $A_\mu$ which are almost invariant under the action of $A_\mu$. 
Proposition~\ref{prop:fromL2toMassAtVertex} says that we have such a sequence in $\ell^2(G)$, which we modify  to obtain a sequence in $\ell^2(A_\mu)$.

\begin{proof}
Choose a set $I=\lbrace s_1,s_2,\dots\rbrace$ of right coset representatives for $A_\mu$, which is  countable since $G$ is finitely generated. 
For $n\in\Z_+,s\in I$
define $\phi_{n,s}:G\ra \R$ by
$$\phi_{n,s}(x)=
\begin{cases}
\xi_n(x)\;\;\;\;\;\;\;\;\;\hfill\text{if $x\in A_\mu s$}\\
0\hfill\text{otherwise.}
\end{cases}$$
Then 
$$\xi_n=\sum_{s\in I}\phi_{n,s}.$$

Since $A_\mu$ is a subgroup, translation by $k^{-1}\in A_\mu$ on the left preserves the right cosets.  
Hence
$$k\cdot \xi_n=\sum_{s\in I}k\cdot \phi_{n,s}.$$

We will now construct a sequence of unit vectors in $\ell^2(A_\mu)$ which are almost invariant. 
For $n\in \Z_+, s\in I$ define $\psi_{n,s}:A_\mu\ra \R$ by $\psi_{n,s}(h)=\phi_{n,s}(hs)=
\xi_n(hs)$ where  $h\in A_\mu$.
Then $\psi_{n,s}\in\ell^2(A_\mu)$ since $\sum_{h\in A_\mu} \psi_{n,s}(h)=\sum_{h\in A_\mu} \xi_{n}(hs)$ is finite. We also have that the norm of $\psi_{n,s}$ in $\ell^2(A_\mu)$ is equal to the norm of $\phi_{n,s}$ in $\ell^2(G)$. We denote this norm by 
 $a_{n,s}$.  
 Note  $\sum_{s\in I}\left(a_{n,s}\right)^2=
\left\Vert \xi_n \right\Vert_2^2=1.$

Putting all this together:
\begin{align*}
\nonumber
\left\Vert k\cdot \xi_n-\xi_n\right\Vert_2^2 
& = \left\Vert \sum_{s\in I}k\cdot \phi_{n,s}-\sum_{s\in I}\phi_{n,s}\right\Vert_2^2\\
& = \left\Vert \sum_{s\in I}\left(k\cdot \phi_{n,s}-\phi_{n,s}\right)\right\Vert_2^2\\
& = \sum_{x\in G}\sum_{s\in I}
\left[\left(k\cdot \phi_{n,s}-\phi_{n,s}\right)(x)\right]^2\\
& = \sum_{s\in I}\sum_{x\in G}
\left[\left(k\cdot \phi_{n,s}-\phi_{n,s}\right)(x)\right]^2\\
& = \sum_{s\in I}\sum_{y\in A_\mu}
\left[\left(k\cdot \phi_{n,s}-\phi_{n,s}\right)(ys)\right]^2\intertext{\hfill(since $\phi_{n,s}$ is zero outside the $s$-coset)}
& = \sum_{s\in I}
\left(a_{n,s} \right)^2
\sum_{y\in A_\mu}
\left[\left(k\cdot \frac{\phi_{n,s}}{a_{n,s}}-\frac{\phi_{n,s}}{a_{n,s}}\right)(ys)\right]^2.\label{eqn:subgroup1}
\end{align*}
Now if  for all $s\in I$ 
 we have  
$$
\sum_{y\in A_\mu}
\left[\left(k\cdot \frac{\phi_{n,s}}{a_{n,s}}-\frac{\phi_{n,s}}{a_{n,s}}\right)(ys)\right]^2 \geq\epsilon
$$
then the above equation becomes
\begin{align*}
\left\Vert k\cdot \xi_n-\xi_n\right\Vert_2^2 
&\geq \epsilon\sum_{s\in I}
\left(a_{n,s} \right)^2=\epsilon.
\end{align*}
Therefore, $\left\Vert k\cdot \xi_n-\xi_n\right\Vert_2^2 < \epsilon$ implies there exists $s$ such that 
$$\sum_{y\in A_\mu}
\left[\left(k\cdot \frac{\phi_{n,s}}{a_{n,s}}-\frac{\phi_{n,s}}{a_{n,s}}\right)(ys)\right]^2 <\epsilon.
$$
Since $\left\Vert k\cdot \xi_n-\xi_n\right\Vert_2^2$
 limits to 
zero for every $k\in A_\mu$, there exists a sequence  $s_n$ for which 
\begin{equation}
\sum_{y\in A_\mu}
\left[\left(k\cdot \frac{\phi_{n,s_n}}{a_{n,s_n}}-\frac{\phi_{n,s_n}}{a_{n,s_n}}\right)(ys)\right]^2 \rightarrow 0.\label{eqn:subgroup2}
\end{equation}
for every $k\in A_\mu$.

Rewriting in terms of corresponding functions in $\ell^2(A_\mu)$, 
\begin{equation}
\nonumber
\sum_{y\in A_\mu}
\left[\left(k\cdot \frac{\psi_{n,s_n}}{a_{n,s_n}}-\frac{\psi_{n,s_n}}{a_{n,s_n}}\right)(y)\right]^2 
=\left\Vert k\cdot\frac{\psi_{n,s_n}}{a_{n,s_n}}-\frac{\psi_{n,s_n}}{a_{n,s_n}}\right\Vert_2^2
\rightarrow 0\label{eqn:subgroup2}
\end{equation}
and so  $\frac{\psi_{n,s_n}}{a_{n,s_n}}$
 supplies a sequence of almost invariant unit vectors in 
$\ell^2(A_\mu)$, and $A_\mu$ is amenable.\end{proof}

That $A_{\mu}$ is an amenable subgroup does not
preclude it being trivial for all
non-amenable $G$, nor does it guarantee
 that $A_{\mu}$ reflects any of the underlying 
structure of $G$. 
The next result shows that in many cases $A_\mu$ is an interesting non-trivial subgroup.

Recall that the {\em elliptic radical} of a finitely generated group $G$ is the largest normal, locally finite subgroup of $G$ (see for example \cite{Caprace}).  It is the group generated by all finite normal subgroups of $G$, and is contained in the amenable radical, the largest amenable normal subgroup.  We now prove a result which implies that  the elliptic radical 
 is contained in $A_\mu$.

\begin{thm}\label{thm:rell}
$A_\mu$ contains every finite normal subgroup of $G$. In particular, the elliptic radical 
 is contained in $A_\mu$.
\end{thm}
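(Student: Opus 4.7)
Let $N$ be a finite normal subgroup of $G$; we want to show $N \subseteq A_\mu$. The elliptic radical is then automatically contained in $A_\mu$, since it is generated by all finite normal subgroups of $G$ and $A_\mu$ is a subgroup by Theorem~\ref{thm:subgroup}. My plan to show $N \subseteq A_\mu$ is to exploit normality to split $\mu$ into a component uniform on $N$ plus a remainder, and then argue that the obstruction to $g \in N$ lying in $A_\mu$ decays strictly faster than $\mu^{*n}(e)$.

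First I would normalise the situation. By Lemma~\ref{lem:kpower} we may replace $\mu$ by $\mu^{*k}$ for $k$ sufficiently large, so by aperiodicity and finiteness of $N$ we may assume $\mu(h) > 0$ for every $h \in N$. Let $\nu = |N|^{-1}\sum_{h \in N}\delta_h$, set $\alpha = |N|\cdot\min_{h \in N}\mu(h) \in (0,1]$, and write $\mu = \alpha\nu + (1-\alpha)\mu'$. If $\alpha = 1$ then $\mu$ is supported on $N$, so $G = N$ is finite and the claim is immediate; assume $\alpha < 1$. The key structural input from normality is that $xN = Nx$ for every $x \in G$, which implies that $\nu$ is central in the convolution algebra ($\nu * \sigma = \sigma * \nu$ for every measure $\sigma$); in particular $\nu$ and $\mu'$ commute under convolution, and $\nu^{*2} = \nu$.

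Next, binomially expanding $\mu^{*n} = (\alpha\nu + (1-\alpha)\mu')^{*n}$ using commutativity and idempotence, and grouping the terms carrying at least one $\nu$-factor, leads to the identity
\[
\mu^{*n} = (1-\alpha)^n(I - P_N)\mu'^{*n} + \nu * \tau^{*n},
\]
where $\tau = \alpha\delta_e + (1-\alpha)\mu'$ and $P_N f = \nu * f$ is the orthogonal projection of $\ell^2(G)$ onto its subspace of $N$-left-invariant vectors. For $g \in N$, the summand $\nu * \tau^{*n}$ is $N$-left-invariant and so takes the same value at $g$ and at $e$, while $(I - P_N)\mu'^{*n}$ has vanishing mean on the coset $N$; hence
\[
\mu^{*n}(g) - \mu^{*n}(e) = (1-\alpha)^n\bigl[\mu'^{*n}(g) - \mu'^{*n}(e)\bigr].
\]
The Cauchy--Schwarz inequality for symmetric $\mu'$ gives $\mu'^{*n}(g) \leq \mu'^{*n}(e)$, and the decomposition provides the pointwise lower bound $\mu^{*n}(e) \geq |N|^{-1}\tau^{*n}(e)$, so
\[
\left|\frac{\mu^{*n}(g)}{\mu^{*n}(e)} - 1\right| \leq |N|\cdot\frac{(1-\alpha)^n\,\mu'^{*n}(e)}{\tau^{*n}(e)}.
\]

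The final step, showing the right-hand side tends to $0$, is where the hard part lies; everything preceding is formal algebra driven by the normality of $N$. The key point is that $\tau = \alpha\delta_e + (1-\alpha)\mu'$ carries a lazy atom at $e$, so the operator $M_\tau = \alpha I + (1-\alpha)M_{\mu'}$ has spectral radius $\alpha + (1-\alpha)\rho'$, strictly larger than the spectral radius $(1-\alpha)\rho'$ of $(1-\alpha)M_{\mu'}$ which governs $(1-\alpha)^n\mu'^{*n}(e)$. To make this quantitative I would select the $k \approx \alpha n$ term in the expansion $\tau^{*n}(e) = \sum_k \binom{n}{k}\alpha^k(1-\alpha)^{n-k}\mu'^{*(n-k)}(e)$, apply Stirling to the binomial coefficient, and invoke Kesten's theorem $\mu'^{*m}(e)^{1/m} \to \rho'$, obtaining an estimate of the form $(1-\alpha)^n\mu'^{*n}(e)/\tau^{*n}(e) = O\bigl(\sqrt{n}\bigl((1-\alpha)(\rho')^\alpha\bigr)^n\bigr)$; since $(1-\alpha)(\rho')^\alpha \leq 1-\alpha < 1$, this decays exponentially, completing the argument.
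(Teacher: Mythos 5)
Your argument is correct in substance but follows a genuinely different route from the paper's. The paper's proof factors the walk (after passing to $\kappa=\mu^{\ast k}$ with $F\subseteq\supp(\kappa)$) into two coordinates, the second being a finite Markov chain on $F$ driven by right multiplications and conjugations; symmetry of $\kappa$ gives detailed balance for the uniform measure, so the fundamental theorem of finite Markov chains forces the $F$-coordinate to equidistribute and the ratio tends to $1$. You instead exploit that the uniform measure $\nu$ on $N$ is a central idempotent of the convolution algebra, so that the exact identity $\mu^{\ast n}=(1-\alpha)^n(I-P_N)\mu'^{\ast n}+\nu\ast\tau^{\ast n}$ confines the entire deviation $\mu^{\ast n}(g)-\mu^{\ast n}(e)$, $g\in N$, to an exponentially damped term, which you then beat against a lower bound for $\tau^{\ast n}(e)$ extracted from the near-central binomial term together with Kesten's theorem. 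Your version is quantitative (it yields an exponential rate for $\mu^{\ast n}(g)/\mu^{\ast n}(e)\to 1$) and avoids the coupled-coordinate bookkeeping, at the price of needing spectral-radius input rather than only elementary Markov chain theory. Two small repairs are needed: the inequalities $\mu'^{\ast n}(g)\le\mu'^{\ast n}(e)$ and $(I-P_N)\mu'^{\ast n}(e)\ge 0$ (the latter is what your lower bound $\mu^{\ast n}(e)\ge |N|^{-1}\tau^{\ast n}(e)$ actually rests on) are guaranteed only at even times, since $\mu'$ need not be aperiodic, and likewise $\mu'^{\ast(n-k)}(e)$ can vanish for $n-k$ odd. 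All of this is fixed by running the estimate along even $n$ and choosing $k$ of the appropriate parity near $\alpha n$, which suffices because Proposition~\ref{prop:fromL2toMassAtVertex} (or Lemma~\ref{lem:kpower} with $k=2$) characterises $A_\mu$ by the even-time ratios alone.
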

\begin{proof}
Let $F$ be a finite normal subgroup of $G$.  
Since the support of $\mu$  generates $G$, $F$ is finite and $\mu$ is aperiodic, we have  $\mu^{\ast |F|}(f)$ is non-zero for all $f\in F$.
Setting $\kappa=\mu^{\ast |F|}$ we have   $F\subseteq \supp(\kappa)$, and  
$A_\mu=A_\kappa$ by Lemma~\ref{lem:kpower}.

Let $S=\supp(\kappa)$. Each walk $(g_0=e, g_1, \dots)$ induced by $\kappa$ corresponds uniquely to a sequence $((h_0,f_0), (h_1,f_1),\dots)$ where $h_0=f_0=e$, $h_i\in\langle S\setminus F\rangle, f_i\in F$ and $g_i=h_if_i$ defined by the following process: if $g_n=g_{n-1}x, x\in \supp(\kappa)$ then 
$$(h_n,f_n)=\begin{cases}
(h_{n-1},f_{n-1}x)\hfill x\in F\\
(h_{n-1}x,x^{-1}f_{n-1}x)\;\;\;\;\;\;\;\;\;\hfill x\in S\setminus F.
\end{cases}$$

Define the measure $\phi:\langle S\setminus F\rangle\ra \R$ by  $$\phi(x)=
\begin{cases}
\kappa(F)\hfill x=e\\
\kappa(x)\;\;\;\;\;\;\;\;\;\hfill x\in S\setminus F
\end{cases}$$
Then $\phi^{\ast n}$ is the distribution of the first coordinate after $n$ steps.

The process on the second coordinate is a {Markov chain} on the state space $F$ where each move corresponds either to a right 
multiplication by $x\in F$ or to a conjugation by some element of $x\in S\setminus F$, each with probability $\kappa(x)$.
Let $\tau_n$ denote the distribution of the second coordinate after $n$ steps.  
We will prove that $\tau_n$ approaches the uniform distribution of $F$ using standard Markov chain theory (see for example \cite{ERvR} for further information).

Let $\Pr(f\ra g)$ be the probability of moving from state $f$ to $g$ in one step of the Markov chain.  If the step from $f$ to $g$ is a conjugation by $x$ (ie $g=x^{-1}fx$) then $f=xgx^{-1}$ so $\Pr(f\ra g)=Pr(g\ra f)$ since $\kappa$ is symmetric. Otherwise, the step is induced by right multiplication and clearly $\Pr(f\ra g)=\Pr(g\ra f)$  (since $g=fx$ only if $f=gx^{-1}$).
It follows that the Markov chain satisfies the {\em detailed balance} condition for the uniform measure $\pi=\frac1{|F|}$, ie $$\pi(f)\Pr(f\ra g)=\pi(g)\Pr(g\ra f)$$ and so 
 $\pi$ is a stationary distribution on $F$, that is $$\pi(f)=\sum_{y\in F} \pi(y)\Pr(y\ra f).$$

Since $e\in\supp(\kappa)$ the Markov process on $F$ is {\em aperiodic}, and since $F$ is finite the process is {\em irreducible}.
Then by the {Fundamental Theorem of Markov chains} 
(see for example \cite{ERvR} Theorem~3.12) 
$\tau_n$ converges to the unique stationary distribution $\pi$.

Now consider an $n$ step walk of the walk motivated by $\kappa$ which ends at some $f\in F$. We have 
$$\kappa^{\ast n}(f)=\sum_{g\in F}\phi^{\ast n}(g).\tau_n(g^{-1}f)$$ since to end at $f$ we must have first coordinate $g$ and second coordinate $g^{-1}f\in F$.
Then
\begin{align*}
\lim_{n\rightarrow\infty}\frac{\kappa^{\ast n}(f)}{\kappa^{\ast n}(e)} 
& =\lim_{n\rightarrow\infty}\frac{\sum_{g\in F} \phi^{\ast n}(g)\tau_n(g^{-1}f)}{\sum_{g\in F} \phi^{\ast n}(g)\tau_n(g^{-1})}\\
&=\lim_{n\rightarrow\infty}\frac{\sum_{g\in F} \phi^{\ast n}(g)\pi(g^{-1}f)}{\sum_{g\in F} \phi^{\ast n}(g)\pi(g^{-1})}=1\\
\end{align*} since $\pi$ is the uniform distribution on $F$, 
and so $F\subseteq A_\kappa=A_\mu$.
\end{proof}

Theorem \ref{thm:rell} is notable for two reasons.  
Firstly it shows that, whenever a finitely generated 
group contains a finite
normal subgroup, $A_{\mu}$ is non-trivial.  Secondly, this result
is independent of $\mu$.

\section{Examples}

Recall that non-abelian free groups have no non-trivial amenable normal subgroups.  That is, the amenable radical is trivial.

\begin{lem}\label{lem:freegroup}
Let $F_d$ be the free group of rank $d\geq 2$ with free basis generators including $a,b$, and let $\mu$ be a symmetric, aperiodic 
measure whose support generates $F_d$  satisfying $\mu(e)>0$ and $\mu(a)=\mu(b)>0$. Then $A_{F_d,\mu}$ is trivial.
\end{lem}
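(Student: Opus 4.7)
The strategy is to argue by contradiction: assuming $A_\mu\ne\{e\}$, Theorems~\ref{thm:subgroup} and~\ref{prop:amenablesubgroup} make $A_\mu$ a non-trivial amenable subgroup of $F_d$. Subgroups of free groups are free (Nielsen--Schreier) and amenable free groups are trivial or infinite cyclic, so $A_\mu=\langle w\rangle$ for some $w\ne e$.

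The main tool is the Kesten/Akemann--Ostrand operator-norm bound. Using the embedding $\langle a,b\rangle\cong F_2\hookrightarrow F_d$ and the fact that the restriction of the left regular representation of $F_d$ to $\langle a,b\rangle$ decomposes as a direct sum of copies of the regular representation of $F_2$, one has
\[
\|L_a+L_{a^{-1}}+L_b+L_{b^{-1}}\|_{\mathrm{op}}=2\sqrt{3}
\]
on $\ell^2(F_d)$. Evaluating at the vectors $\xi_n$ and invoking Proposition~\ref{prop:fromL2toMassAtVertex} gives
\[
\gamma_a(n)+\gamma_b(n)\le\sqrt{3},\qquad \gamma_g(n):=\mu^{\ast 2n}(g)/\mu^{\ast 2n}(e),
\]
for every $n$; so $\{a,b\}\not\subseteq A_\mu$ (else the limit would be $2\le\sqrt{3}$). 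The same inequality for any pair $\{g,h\}$ freely generating a rank-$2$ subgroup of $F_d$ shows that if $a\in A_\mu$, then for every $h\notin\langle a\rangle$ the subgroup $\langle a,h\rangle$ is free of rank $2$ (else $a,h$ would lie in a common cyclic, forcing $h\in\langle a\rangle$ by primitivity of $a$), so $\gamma_h\le\sqrt{3}-1<1$ asymptotically; this forces $A_\mu\subseteq\langle a\rangle$, hence $A_\mu=\langle a\rangle$. Applying the same reasoning with $a$ replaced by any $w\in A_\mu\setminus\{e\}$ lying outside $\langle a\rangle$ disposes of every cyclic $A_\mu=\langle w\rangle$ with $w\notin\langle a\rangle$, via the pair bound on $\{w,a\}$.

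The main obstacle is the residual case $A_\mu=\langle a\rangle$ (and, symmetrically, $\langle b\rangle$ or $\langle a^k\rangle$ with $k\ne0$). There the pair bound only produces $\gamma_b\le\sqrt{3}-1$, which is consistent with $b\notin A_\mu$ and yields no direct contradiction. To close this case the hypothesis $\mu(a)=\mu(b)$ must be used substantively. Two natural routes: (i) a coupling of walks that pairs up steps through $a$ with steps through $b$ to establish $\limsup\gamma_a(n)=\limsup\gamma_b(n)$, whence $\gamma_a=1$ forces $\gamma_b=1$ in contradiction with the Kesten inequality; or (ii) a refined spectral estimate combining the bound $\|M_\mu\|_{\mathrm{op}}\le 1-(4-2\sqrt{3})\alpha<1$ (from $M_\mu=\mu(e)I+\alpha(L_a+L_{a^{-1}}+L_b+L_{b^{-1}})+Q$, with $\alpha=\mu(a)=\mu(b)$) with the identity $\sum_g\mu(g)\gamma_g(n)\to\|M_\mu\|_{\mathrm{op}}$ to extract the extra constraint on the distribution of mass of $\mu$ between $\langle a\rangle$ and its complement needed to rule out $A_\mu=\langle a^k\rangle$.
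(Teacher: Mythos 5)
Your reduction to the cyclic case is sound, but it is also where the proof stops being a proof. By Theorems~\ref{thm:subgroup} and~\ref{prop:amenablesubgroup} together with Nielsen--Schreier, $A_\mu$ is an amenable free group, hence trivial or infinite cyclic; your Kesten bound $\gamma_g(n)+\gamma_h(n)\le\sqrt3$ for a free pair $\{g,h\}$ is correct (via the coset decomposition of $\ell^2(F_d)$ over $\langle g,h\rangle\cong F_2$ and the identity $\langle\xi_n,g\cdot\xi_n\rangle=\gamma_g(n)$), but it only re-proves, quantitatively, that $A_\mu$ contains no free pair. In particular the claim that the pair bound on $\{w,a\}$ ``disposes of'' $A_\mu=\langle w\rangle$ is wrong: from $\gamma_w(n)\to1$ it yields only $\limsup_n\gamma_a(n)\le\sqrt3-1$, i.e.\ $a\notin A_\mu$, which is perfectly consistent with $A_\mu=\langle w\rangle$. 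The entire content of the lemma is ruling out the infinite cyclic case, and there you offer only two unexecuted ``natural routes''; route (i) is not carried out, and route (ii) as stated produces no visible constraint on $\gamma_{a^k}$. This is a genuine gap, not a detail to be filled in.

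The missing idea --- and the reason the hypothesis $\mu(a)=\mu(b)$ is there --- is a symmetry argument. The paper uses the automorphism $\sigma$ of $F_d$ exchanging $a$ and $b$ (and fixing the other basis elements): when $\mu$ is $\sigma$-invariant one gets $\mu^{\ast n}(\sigma(g))=\mu^{\ast n}(g)$ for all $n$ and $g$, hence $\sigma(A_\mu)=A_\mu$. A nontrivial $u\in A_\mu$ then forces $\sigma(u)\in A_\mu$, and $u,\sigma(u)$ generate a rank-$2$ free subgroup of the amenable group $A_\mu$ --- a contradiction --- unless they are powers of a common element, in which degenerate case one replaces $\sigma$ by the automorphism $a\mapsto b^{-1}$, $b\mapsto a^{-1}$. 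Your route (i) is a shadow of this, but note that it requires invariance of $\mu$ under the whole automorphism (not merely $\mu(a)=\mu(b)$ together with a step-by-step coupling), and the conclusion must be drawn at the level of the subgroup $A_\mu$ (producing a second, non-commuting element of $A_\mu$), not merely as a comparison of the two scalars $\gamma_a$ and $\gamma_b$.
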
 
\begin{proof}
Let $u\in\{a^{\pm 1}, b^{\pm 1}\}^+$. If $u\in A_\mu$ then  by interchanging $a^{\pm 1}$ with $b^{\pm 1}$ we obtain a word $v$ which also lies in $A_\mu$ by symmetry of the measure with respect to the generators $a,b$. If $u,v$ are not powers of the same element, in which case they generate a free group of rank 2, and since $A_\mu$ is an amenable subgroup it must be trivial. Otherwise if $u,v$ generate a cyclic group, choose instead to replace $a^{\pm 1}$ by $b^{\mp 1}$. 
\end{proof}

\begin{lem}\label{lem:productMeasure}
Suppose $G, H$ are finitely generated groups with 
symmetric, aperiodic probability 
 measures $\phi$ and $\psi$ respectively whose supports generate $G$ and $H$ respectively.  Recall that the product measure $\mu$ on $G\times H$ is defined by 
$$\mu(x,y)=\phi(x)\psi(y).$$ 
Then
 $$A_{G\times H, \mu}=A_{G,\phi}\times A_{H,\psi}.$$
\end{lem}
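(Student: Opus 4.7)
The plan is to reduce the claim to the fact that convolution of a product measure factors. A straightforward induction on $n$, using $\mu(x,y) = \phi(x)\psi(y)$, gives $\mu^{\ast n}((x,y)) = \phi^{\ast n}(x)\,\psi^{\ast n}(y)$ for all $(x,y)\in G\times H$ and all $n\in\Z_+$, so the defining ratio factors as
\[
\frac{\mu^{\ast n}((g,h))}{\mu^{\ast n}((e,e))} = \frac{\phi^{\ast n}(g)}{\phi^{\ast n}(e)}\cdot\frac{\psi^{\ast n}(h)}{\psi^{\ast n}(e)}.
\]
The inclusion $A_{G,\phi}\times A_{H,\psi}\subseteq A_{G\times H,\mu}$ is then immediate, since if both factors on the right tend to $1$ then so does their product.

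For the reverse inclusion I would first invoke Lemma~\ref{lem:kpower} to replace $\mu$, $\phi$, $\psi$ by their squares, which has no effect on the associated $A$-sets. Applying Cauchy--Schwarz to equation~(\ref{eqn:2ng}) shows $\mu^{\ast 2n}(g) = \langle \mu^{\ast n}, g\cdot\mu^{\ast n}\rangle \leq \left\Vert \mu^{\ast n}\right\Vert_2^2 = \mu^{\ast 2n}(e)$, and analogously for $\phi$ and $\psi$, so along the even subsequence every such ratio lies in $[0,1]$. Given $(g,h)\in A_{G\times H,\mu}$, set $a_n = \phi^{\ast 2n}(g)/\phi^{\ast 2n}(e)$ and $b_n = \psi^{\ast 2n}(h)/\psi^{\ast 2n}(e)$; both lie in $[0,1]$ and by the factorization satisfy $a_n b_n \to 1$. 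If $a_n$ did not converge to $1$ there would exist $\e>0$ and a subsequence with $a_{n_k}\leq 1-\e$, forcing $a_{n_k}b_{n_k}\leq 1-\e$ and contradicting $a_n b_n \to 1$. Hence $a_n\to 1$ and likewise $b_n\to 1$, which by Lemma~\ref{lem:kpower} yields $g\in A_{G,\phi}$ and $h\in A_{H,\psi}$.

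The only point requiring care is the parity issue: the ratio $\mu^{\ast n}(g)/\mu^{\ast n}(e)$ need not be bounded by $1$ for odd $n$, which is why the second half of the argument passes to even powers. Apart from that, the proof is a routine combination of the convolution factorization with a short real-analysis squeeze, and I do not anticipate any serious obstacle.
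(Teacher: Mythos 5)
Your proof is correct and follows the same route as the paper: the key step in both is the inductive factorization $\mu^{\ast n}(x,y)=\phi^{\ast n}(x)\psi^{\ast n}(y)$, after which the forward inclusion is immediate. In fact your treatment of the reverse inclusion --- passing to even powers via Lemma~\ref{lem:kpower} and using Cauchy--Schwarz to bound each ratio by $1$ before splitting the limit --- is more careful than the paper's, which simply writes $\lim(a_nb_n)=\lim a_n\cdot\lim b_n$ without justifying that the individual limits exist.
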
\begin{proof}
To prove this we first note that $\mu^{\ast n}(x,y)=\phi^{\ast n}(x)\psi^{\ast n}(y)$.  This may be shown inductively.  It is true for $n=1$ by definition, and $\mu^{\ast n}(x,y)=\phi^{\ast n}(x)\psi^{\ast n}(y)$ implies

\begin{align*}
\mu_{n+1}(x,y) & =  
\sum_{(g,h)\in G\times H}\mu^{\ast n}(g,h)\mu(g^{-1}x,h^{-1}y)\\
& =  
\sum_{g\in G} \sum_{h\in H}
\left[
\phi^{\ast n}(g)\psi^{\ast n}(h)
\right]
\left[
\phi(g^{-1}x)\psi(h^{-1}y)
\right]\\
& =  
\sum_{g\in G} \sum_{h\in H}
\left[
\phi^{\ast n}(g)\phi(g^{-1}x)
\right]
\left[
\psi^{\ast n}(h)\psi(h^{-1}y)
\right]\\
& =  \sum_{g\in G} 
\phi^{\ast n}(g)\phi(g^{-1}x)
\sum_{h\in H}
\psi^{\ast n}(h)\psi(h^{-1}y)\\
& =  \phi_{n+1}(x)\psi_{n+1}(y).
\end{align*}

Thus
\begin{align*}
\lim_{n\rightarrow \infty}\frac{\mu^{\ast n}(g,h)}{\mu^{\ast n}(e_G,e_H)} 
& = 
\lim_{n\rightarrow\infty}\frac{\phi^{\ast n}(g)\psi^{\ast n}(h)}{\phi^{\ast n}(e_G)\psi^{\ast n}(e_H)}
= \lim_{n\rightarrow \infty} \frac{\phi^{\ast n}(g)}{\phi^{\ast n}(e_G)}\lim_{n\rightarrow \infty} \frac{\psi^{\ast n}(h)}{\psi^{\ast n}(e_H)}
\end{align*}
from which the result follows. 
\end{proof}

\begin{example}
Let $F_d$ be the free group of rank $d\geq 2$ with free basis generators including $a,b$, $\phi$ be a symmetric, aperiodic 
measure whose support generates $F_d$  satisfying $\phi(e)>0$ and $\phi(a)=\phi(b)>0$,  $H$  an amenable group with good  measure $\psi$, and $\mu$ be the product measure on $F_d\times H$.
Then $A_{F_d\times H,\mu}=H$, which is exactly the amenable radical of $F_d\times H$.
\end{example}

In light of these examples and the fact that $A_\mu$ contains the elliptic radical, one might ask whether $A_\mu$ is in fact always the amenable radical. If so, this would imply for one thing that the set $A_\mu$ is invariant under choice of  measure.
It turns out that this is not the case -- in the next section we  give an example where the amenable radical is trivial but $A_\mu$ is not. Moreover we show that $A_\mu$ depends on the choice of measure.

\section{Dependence on the measure}

\begin{prop}\label{prop:finite-inside-Amu}
Let $G$ be a finitely generated group with a finite subgroup $F$.  Then there exists a
 symmetric, aperiodic probability measure $\phi$ on $G$ whose support generates $G$
 such that $F\subset A_\phi$.
\end{prop}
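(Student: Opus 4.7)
The plan is to engineer $\phi$ so that $\phi^{\ast n}$ is left-invariant under every element of $F$, which will force $\phi^{\ast n}(f)=\phi^{\ast n}(e)$ on the nose for all $n$ and all $f\in F$, trivially placing $F$ inside $A_\phi$. Unlike Theorem~\ref{thm:rell}, which used normality of $F$ to set up a Markov chain on $F$ via conjugation, here $F$ is an arbitrary finite subgroup, so the idea is to force invariance directly by building uniform-on-$F$ factors into $\phi$.

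To do this, let $\nu=\frac{1}{|F|}\chi_F$ be the uniform probability measure on $F$. Because $F$ is a subgroup, $\nu$ is symmetric, $\nu\ast\nu=\nu$ (it is the normalised Haar measure on $F$ pushed into $G$), and left translation by any $f\in F$ permutes $F$, so $f\cdot\nu=\nu$. Next, choose any symmetric probability measure $\psi$ on $G$ with $\psi(e)>0$ whose support together with $F$ generates $G$ (for example, the standard lazy walk measure on a finite symmetric generating set). Define
\[
\phi \;:=\; \nu\ast\psi\ast\nu.
\]
This is a symmetric probability measure, using the general identity $(\alpha\ast\beta)^{\vee}=\beta^{\vee}\ast\alpha^{\vee}$ together with $\nu^{\vee}=\nu$ and $\psi^{\vee}=\psi$; it is aperiodic because $\phi(e)\geq \nu(e)\psi(e)\nu(e)>0$; and its support equals $F\cdot\supp(\psi)\cdot F\supseteq F\cup\supp(\psi)$, which generates $G$.

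The key observation is that because $\nu\ast\nu=\nu$, the convolution power collapses to
\[
\phi^{\ast n} \;=\; \nu\ast\psi\ast\nu\ast\psi\ast\cdots\ast\nu,
\]
with $n$ copies of $\psi$ interleaved with $n+1$ copies of $\nu$. In particular $\phi^{\ast n}=\nu\ast M_n$ for some probability measure $M_n$, so for any $f\in F$,
\[
f\cdot\phi^{\ast n}=(f\cdot\nu)\ast M_n=\nu\ast M_n=\phi^{\ast n}.
\]
Evaluating both sides at $e$ gives $\phi^{\ast n}(f^{-1})=\phi^{\ast n}(e)$, and since $F$ is closed under inversion this yields $\phi^{\ast n}(f)=\phi^{\ast n}(e)$ for every $f\in F$ and every $n\in\Z_+$. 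Hence the ratio is identically $1$ and $F\subseteq A_\phi$.

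There is no substantial obstacle once the construction is in hand; the two points requiring care are the symmetry of $\nu\ast\psi\ast\nu$ on a possibly non-abelian $G$ (handled by the involution identity above) and the fact that $f\cdot\nu=\nu$ uses only that $\nu$ is uniform on a subgroup containing $f$, not any normality of $F$ in $G$, which is exactly why the construction succeeds beyond the reach of Theorem~\ref{thm:rell}.
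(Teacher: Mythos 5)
Your proof is correct and is essentially the paper's own argument: the paper likewise takes $\phi=\pi_F\ast\mu\ast\pi_F$ with $\pi_F$ uniform on $F$ and deduces $\phi^{\ast n}(f)=\phi^{\ast n}(e)$ for all $n$ from the left $F$-invariance of $\phi$. Your route through the idempotency $\nu\ast\nu=\nu$ is just a slightly cleaner way of propagating that invariance to the convolution powers, but the construction and mechanism are the same.
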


\begin{proof}
Take $\psi=\pi_F\ast \mu\ast \pi_F$ where $\pi_F$ is the uniform measure on $F$.  
Then $$\phi(x)=\frac1{|F|^2}\sum_{f_1,f_2\in F} \mu(f_1xf_2)$$ which is symmetric, $\phi(e)\geq \frac1{|F|^2}\mu(e)>0$ and $\supp(\phi)\supseteq \supp(\mu)$. For $f\in F,x\in G$ we also have $\phi(fx)=\phi(x)$, so 
\begin{align*}
\psi^{\ast n}(f)
&=\sum_{g\in G}\psi(g)\psi^{\ast n-1}(g^{-1}f)\\
&=\sum_{x\in G}\psi(f^{-1}g)\psi^{\ast n-1}(g^{-1}f)\\
&=\psi^{\ast n}(e) 
\end{align*}
so  $F\subset A_\psi$.
\end{proof}

\begin{cor}
There exists a finitely generated group $G$ and symmetric, aperiodic probability measures $\mu,\tau$ on $G$ whose support generates $G$
 such that $A_{G,\mu}\neq A_{G,\tau}$.
\end{cor}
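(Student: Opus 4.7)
The plan is to exhibit a concrete finitely generated non-amenable group $G$ carrying two symmetric, aperiodic probability measures $\mu, \tau$ (with generating supports) whose $A$-subgroups differ. The strategy is to combine Proposition~\ref{prop:finite-inside-Amu}, which supplies one measure whose $A$-set contains a prescribed finite subgroup, with a symmetry argument forcing a second, ``maximally symmetric'' measure to exclude that subgroup from its $A$-set. A convenient choice is $G = \Z/2\Z \ast \Z/2\Z \ast \Z/2\Z = \langle a, b, c \mid a^2 = b^2 = c^2 = e\rangle$: it is non-amenable (for example $\langle ab, bc\rangle$ is a free subgroup of rank two by a direct normal-form check), it contains the non-trivial finite subgroup $\langle a\rangle$, and it admits an automorphism $\sigma$ cyclically permuting $a, b, c$.

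First, apply Proposition~\ref{prop:finite-inside-Amu} to $G$ with $F = \langle a\rangle$ to obtain a symmetric, aperiodic probability measure $\mu$ on $G$, with generating support, such that $\langle a\rangle \subseteq A_\mu$; in particular $a \in A_\mu$. Next, define $\tau$ to be the uniform probability measure on $\{e, a, b, c\}$. Since each of $a, b, c$ is self-inverse, $\tau$ is symmetric; since $\tau(e) > 0$, it is aperiodic; and its support generates $G$. Because $\sigma$ permutes $\{e, a, b, c\}$ and $\tau$ is uniform on this set, $\sigma_\ast \tau = \tau$, and consequently $\sigma_\ast \tau^{\ast n} = \tau^{\ast n}$ for every $n$; hence $A_\tau$ is invariant under $\sigma$.

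Now suppose for contradiction that $a \in A_\tau$. Then $b = \sigma(a) \in A_\tau$ and $c = \sigma^2(a) \in A_\tau$, and since $A_\tau$ is a subgroup by Theorem~\ref{thm:subgroup}, we obtain $A_\tau \supseteq \langle a, b, c\rangle = G$. By the corollary to Proposition~\ref{prop:fromL2toMassAtVertex}, this would force $G$ to be amenable, contradicting the presence of $F_2$ in $G$. Therefore $a \notin A_\tau$, whereas $a \in A_\mu$, so $A_{G, \mu} \neq A_{G, \tau}$. The only real obstacle is the choice of $G$: one needs non-amenability (so $A_\tau \ne G$ is permitted), a non-trivial finite subgroup (to feed into Proposition~\ref{prop:finite-inside-Amu}), and enough automorphism symmetry that a uniform measure pins a chosen element outside its $A$-set. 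The three-fold symmetry of $\Z/2\Z \ast \Z/2\Z \ast \Z/2\Z$ satisfies all three requirements simultaneously.
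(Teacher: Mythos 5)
Your proof is correct, but it takes a somewhat different route from the paper's. The paper works in $C_2\ast C_3$ and simply applies Proposition~\ref{prop:finite-inside-Amu} \emph{twice} --- once to $\langle a\rangle$ to get $\mu$ with $a\in A_\mu$ and once to $\langle b\rangle$ to get $\tau$ with $b\in A_\tau$ --- and then observes that $A_\mu=A_\tau$ would force $A_\mu\supseteq\langle a,b\rangle=G$, contradicting non-amenability; it never needs to decide which element fails to lie in which set. You instead invoke the proposition only once and build the second measure explicitly (uniform on $\{e,a,b,c\}$ in $C_2\ast C_2\ast C_2$), then use the order-three automorphism permuting the free factors to conclude that $a\in A_\tau$ would drag $b$ and $c$ in as well and hence make $G$ amenable. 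This is the same symmetry device the paper uses in Lemma~\ref{lem:freegroup} for free groups, transplanted to a free product of finite groups. The paper's argument is shorter and needs less structure from the group (any free product of two non-trivial finite groups other than $C_2\ast C_2$ works); yours buys something concrete in exchange: it exhibits a specific, natural measure $\tau$ together with a definite element $a$ with $a\in A_\mu\setminus A_\tau$, rather than deducing the inequality indirectly. All the supporting claims you use (non-amenability of $C_2\ast C_2\ast C_2$ via the free subgroup $\langle ab,bc\rangle$, invariance of $A_\tau$ under a measure-preserving automorphism, and the subgroup property from Theorem~\ref{thm:subgroup}) check out.
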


\begin{proof}
Consider the free product $G=\langle a\mid a^2=1\rangle\ast \langle b\mid b^3=1\rangle$.  By Proposition~\ref{prop:finite-inside-Amu} 
there are measures $\mu,\tau$ so that $a\in A_\mu$ and $b\in A_\tau$. If $A_\mu=A_\tau$ then $A_\mu=\langle a,b\rangle=G$ which is a contradiction since $G$ is not amenable.  Other examples are readily constructed from free products of finite groups.
\end{proof}

The same example also gives
\begin{cor}
There exists a finitely generated group $G$ and a symmetric, aperiodic probability measure $\mu$ on $G$ whose support generates $G$ so that $A_\mu$  is not equal to the amenable radical.
\end{cor}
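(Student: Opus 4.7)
The plan is to reuse the free product $G=\langle a\mid a^2=1\rangle\ast \langle b\mid b^3=1\rangle$ together with the measure $\mu$ constructed in Proposition~\ref{prop:finite-inside-Amu} for which $\langle a\rangle\subseteq A_\mu$. Since $a\ne e$, this alone says $A_\mu$ is nontrivial, so the task reduces to proving that the amenable radical of $G$ is trivial.

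First I would observe that $G$ is virtually free. The natural surjection $G\to \Z_2\times \Z_3$ has a kernel $K$ of index $6$, and $K$ is torsion-free: by the Kurosh subgroup theorem every finite-order element of a free product is conjugate into one of the factors, and the surjection is injective on each factor, so every nontrivial element of a factor is mapped nontrivially. A torsion-free subgroup of a free product of finite groups acts freely on the Bass--Serre tree and is therefore free. Hence $G$ contains a free subgroup $F$ of finite index. If $R$ denotes the amenable radical of $G$, then $R\cap F$ is an amenable normal subgroup of $F$, and since non-abelian free groups have trivial amenable radical this intersection is trivial. Thus $R$ injects into the finite group $G/F$ and is itself finite.

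Next I would rule out any nontrivial finite normal subgroup. Invoking Kurosh (or equivalently the fact that a finite group acting on a tree fixes a vertex of the Bass--Serre tree), every finite subgroup of $G$ is conjugate to a subgroup of $\langle a\rangle$ or of $\langle b\rangle$. If $R$ were such a finite normal subgroup, normality forces $R$ to equal all of its conjugates, hence $R$ itself must sit inside $\langle a\rangle$ or $\langle b\rangle$. But $bab^{-1}$ has reduced length three in the free product, so $bab^{-1}\notin\langle a\rangle$; symmetrically $aba^{-1}$ and $ab^2a^{-1}$ do not lie in $\langle b\rangle$. Therefore $R=\{e\}$, the amenable radical of $G$ is trivial while $A_\mu$ contains $a$, and the corollary follows.

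The main obstacle is really just bookkeeping around the structure theory of the free product $\Z_2\ast \Z_3$; once the Kurosh description of finite subgroups and the virtually-free structure are in hand, both the finiteness of $R$ and the absence of a nontrivial finite normal subgroup drop out immediately.
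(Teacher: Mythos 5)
Your proposal is correct and uses the same example ($G=C_2\ast C_3$ with the measure from Proposition~\ref{prop:finite-inside-Amu} putting $\langle a\rangle$ inside $A_\mu$), but it establishes the triviality of the amenable radical by a genuinely different route. The paper simply cites $C^*$-simplicity of $C_2\ast C_3$, or argues via the Tits alternative for groups acting on trees that a normal amenable subgroup must fix all vertices or edges or an orbit of ends and is therefore trivial. You instead split the problem in two: first the amenable radical $R$ is finite because $G$ is virtually free (the kernel of $G\to \Z_2\times\Z_3$ is torsion-free by Kurosh, hence acts freely on the Bass--Serre tree and is free), and second no nontrivial finite normal subgroup exists because finite subgroups are conjugate into the factors and normality plus the normal form for free products (e.g.\ $bab^{-1}$ has length three) rules out $\langle a\rangle$ and $\langle b\rangle$. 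Your version is more elementary and self-contained, relying only on standard free-product structure theory rather than $C^*$-simplicity or a Tits-alternative argument; the paper's version is shorter and generalises more readily to other free products and tree actions. One small point to tighten: in the step ``$R\cap F$ is trivial since non-abelian free groups have trivial amenable radical'' you should justify that the finite-index free subgroup $F$ is indeed non-abelian (if it were trivial or infinite cyclic, $G$ would be amenable and $R=G$). This follows, for instance, from the non-amenability of $C_2\ast C_3$ already used in the preceding corollary, or from an Euler characteristic count showing the kernel is free of rank $2$.
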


\begin{proof}
Since $C_2\ast C_3$ contains finite subgroups,  we may use the arguments from Proposition \ref{prop:finite-inside-Amu} to construct
a measure $\mu$ for which $A_\mu$ is non-trivial.
However, $C_2\ast C_3$ has a trivial amenable radical.  This follows from the fact that it is $C^*$-simple \cite{c2StarC3_trivial_amenable_radical}, or by considering the action of the group on a tree. If $N$ is a normal amenable subgroup of a group acting on a tree, then by normality and the Tits alternative it fixes all vertices in $G/A$ (or $G/B$), or all edges, or a $G$-orbit of ends. Since the $G$-action on the space of ends is minimal, this implies in all 3 cases that $N$ is trivial.
\end{proof}

In particular, $A_\mu$ is not always  normal.

\section{Connection to the amenable radical}
In all cases considered, $A_\mu$ always {\em contains} the amenable radical. If this were true for all measures $\mu$,  the next results give a way to directly link the amenable radical with random walk distributions.

\begin{lem}
Let $G$ be a finitely generated group and $\mu$ a 
symmetric, aperiodic probability measure on $G$ whose support generates $G$. Define a measure $\mu_g:G\ra \R$ by 
 $\mu_g(x)=\mu(g^{-1}xg)$ for each $x\in G$.  Then  $\mu_g$ is a symmetric, aperiodic probability measure on $G$ whose support generates $G$, and 
$A_{\mu_g}=gA_\mu g^{-1}.$ 
\end{lem}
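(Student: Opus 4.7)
The plan is to reduce everything to a single key identity, namely
\[
\mu_g^{\ast n}(x) = \mu^{\ast n}(g^{-1}xg) \quad \text{for all } n\in\Z_+, x\in G.
\]
Once this is proved, the remaining assertions drop out essentially by relabelling.

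First I would dispose of the preliminary claims about $\mu_g$. It is a probability measure because $\sum_x \mu(g^{-1}xg) = \sum_y \mu(y)=1$ under the bijection $y=g^{-1}xg$; it is symmetric since $\mu_g(x^{-1}) = \mu(g^{-1}x^{-1}g) = \mu((g^{-1}xg)^{-1}) = \mu(g^{-1}xg) = \mu_g(x)$; and $\supp(\mu_g) = g\supp(\mu)g^{-1}$, which generates $G$ because $\supp(\mu)$ does and conjugation is an automorphism.

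The central computation is the induction for the displayed identity. For $n=1$ it is the definition. Assuming it for $n$, I would write
\begin{align*}
\mu_g^{\ast(n+1)}(y)
&= \sum_{x\in G} \mu_g(x)\,\mu_g^{\ast n}(x^{-1}y) \\
&= \sum_{x\in G} \mu(g^{-1}xg)\,\mu^{\ast n}(g^{-1}x^{-1}yg),
\end{align*}
and then apply the substitution $u = g^{-1}xg$, noting that $g^{-1}x^{-1}yg = u^{-1}(g^{-1}yg)$, to obtain
\[
\mu_g^{\ast(n+1)}(y) = \sum_{u\in G} \mu(u)\,\mu^{\ast n}(u^{-1}g^{-1}yg) = \mu^{\ast(n+1)}(g^{-1}yg),
\]
completing the induction. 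Setting $x=e$ yields $\mu_g^{\ast n}(e) = \mu^{\ast n}(e)$, so $\mu_g$ inherits aperiodicity from $\mu$.

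Finally, for the equality $A_{\mu_g} = gA_\mu g^{-1}$, I would simply observe that by the identity above,
\[
\frac{\mu_g^{\ast n}(h)}{\mu_g^{\ast n}(e)} = \frac{\mu^{\ast n}(g^{-1}hg)}{\mu^{\ast n}(e)},
\]
so $h\in A_{\mu_g}$ iff $g^{-1}hg\in A_\mu$ iff $h \in gA_\mu g^{-1}$. There is no real obstacle here beyond being careful with the change of variables in the convolution; the only subtle point worth flagging is the need to distinguish the roles of $g$ and $g^{-1}$ consistently when substituting.
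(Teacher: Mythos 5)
Your proof is correct and follows essentially the same route as the paper: establish $\mu_g^{\ast n}(x)=\mu^{\ast n}(g^{-1}xg)$ by induction on the convolution, then read off $A_{\mu_g}=gA_\mu g^{-1}$ from the defining ratio. If anything you are slightly more careful than the paper, which only writes out the $n=2$ case of the induction and asserts $\mu(e)>0$ rather than deriving aperiodicity from $\mu_g^{\ast n}(e)=\mu^{\ast n}(e)$ as you do.
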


\begin{proof} 
We have $\mu_g(x^{-1})=\mu(g^{-1}x^{-1}g)=\mu(g^{-1}xg)=\mu_g(x)$, $\mu_g(e)=\mu(e)>0$ and $\supp(\mu_g)=g^{-1}\supp(\mu)g=G$.

For $y\in G$ 
\begin{align*}
{\mu_g}^{\ast 2}(y)
&=\sum_{x\in G}\mu_g(x)\mu_g(x^{-1}y)\\
&=\sum_{x\in G}\mu(g^{-1}xg)\mu(g^{-1}x^{-1}yg)\\
&=\sum_{h\in G}\mu(g^{-1}h)\mu(h^{-1}yg)\\
&=\mu^{\ast 2}(g^{-1}yg).
\end{align*}

Using an inductive argument it is clear that 
$${\mu_g}^{\ast n}(y)=\mu^{\ast n}(g^{-1}yg).$$

Now 
\begin{align*}
x\in A_{\mu^g}
&\iff \lim_{n\ra\infty}\frac{{\mu_g}^{\ast n}(x)}{{\mu_g}^{\ast n}(e)}=1\\
&\iff \lim_{n\ra\infty}\frac{\mu^{\ast n}(g^{-1}xg)}{\mu^{\ast n}(e)}=1\\
&\iff g^{-1}xg\in A_\mu\\
&\iff x\in gA_\mu g^{-1}
\end{align*}
\end{proof}

\begin{prop}\label{prop:outside-amen}
Let $G$ be a finitely generated group.  If $x\in G$ does not belong
to the amenable radical then  for any symmetric, aperiodic probability measure $\mu$ on $G$ whose support generates $G$  there exists $g\in G$ such that $x\notin A_{\mu_g}$.
\end{prop}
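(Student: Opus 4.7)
The plan is to reformulate the statement using the preceding lemma and then invoke Theorem~\ref{prop:amenablesubgroup}. The preceding lemma tells us $A_{\mu_g}=gA_\mu g^{-1}$, so the condition ``$x\notin A_{\mu_g}$'' is equivalent to ``$g^{-1}xg\notin A_\mu$'', and the existence of such a $g$ for every $x$ outside the amenable radical is equivalent to the inclusion
\[
N\;:=\;\bigcap_{g\in G} gA_\mu g^{-1}\;\subseteq\;\text{amenable radical of }G.
\]
So the whole proposition reduces to showing that the normal core of $A_\mu$ in $G$ sits inside the amenable radical.

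That inclusion follows from standard facts. By construction $N$ is normal in $G$, and taking $g=e$ gives $N\subseteq A_\mu$. Theorem~\ref{prop:amenablesubgroup} says $A_\mu$ is amenable, and subgroups of amenable groups are amenable, so $N$ is amenable. Hence $N$ is a normal amenable subgroup of $G$, so by definition $N$ is contained in the (unique, largest) amenable radical.

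Putting the two paragraphs together: given $x$ outside the amenable radical, $x\notin N$, so there is some $g\in G$ with $x\notin gA_\mu g^{-1}=A_{\mu_g}$, as required. There is no real obstacle here; all of the substantive work has already been carried out in proving that $A_\mu$ is amenable and in the conjugation lemma for $A_{\mu_g}$. The only thing worth double-checking is that conjugation by elements of $G$ (not just of $A_\mu$) is legitimate in this reduction, which it is because $\mu_g$ is again a symmetric, aperiodic probability measure whose support generates $G$, so $A_{\mu_g}$ is defined and the lemma applies.
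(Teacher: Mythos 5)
Your proposal is correct and follows essentially the same route as the paper: both reduce, via the conjugation lemma $A_{\mu_g}=gA_\mu g^{-1}$, to the observation that $\bigcap_{g\in G}gA_\mu g^{-1}$ is a normal amenable subgroup (amenability coming from Theorem~\ref{prop:amenablesubgroup} and the fact that subgroups of amenable groups are amenable) and hence lies in the amenable radical. The paper phrases this as a proof by contradiction while you argue by contraposition, but the content is identical.
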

\begin{proof}
Suppose for contradiction  that $x\in G$ belongs to $A_\mu$ for every  symmetric, aperiodic probability measure $\mu$ on $G$ whose support generates $G$.  Then for some fixed $\mu$, by the previous lemma we have $x\in A_{\mu_g}=gA_\mu g^{-1}$ for all $g\in G$. Thus 
$$x\in\bigcap_{g\in G} g A_\mu g^{-1}$$ which is a normal amenable subgroup, hence $x$ belongs to the amenable radical.
\end{proof}

\begin{cor}
Let $\mathscr A_G$ denote the amenable radical of $G$, and $\mathscr M_G$ the set of all symmetric, aperiodic probability measures  on $G$ whose support generates $G$.
 If $\mathscr A_G\subseteq A_\mu$ for every  $\mu\in\mathscr M_G$, then $\mathscr A_G=\bigcap_\mu A_{\mu\in\mathscr M_G}.$
\end{cor}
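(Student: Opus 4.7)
The plan is to establish the two inclusions separately; only the forward direction will use the standing hypothesis, while the reverse direction is essentially Proposition~\ref{prop:outside-amen} in disguise.

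For $\mathscr A_G \subseteq \bigcap_{\mu \in \mathscr M_G} A_\mu$, the hypothesis is precisely that $\mathscr A_G \subseteq A_\mu$ for every $\mu \in \mathscr M_G$, and intersecting over all such $\mu$ preserves the inclusion. So this direction is immediate and costs nothing.

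For the reverse containment $\bigcap_{\mu \in \mathscr M_G} A_\mu \subseteq \mathscr A_G$, I would argue by contrapositive, and in fact this direction does not use the standing hypothesis at all. Suppose $x \in G \setminus \mathscr A_G$, and fix any single $\nu \in \mathscr M_G$. By Proposition~\ref{prop:outside-amen} there is some $g \in G$ with $x \notin A_{\nu_g}$, where $\nu_g(y) = \nu(g^{-1}yg)$. The one small check required is that $\nu_g$ still lies in $\mathscr M_G$, i.e.\ that it is symmetric, aperiodic, and has support generating $G$; this is exactly what is verified in the conjugation lemma preceding Proposition~\ref{prop:outside-amen}. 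Hence $\nu_g$ itself is an element of $\mathscr M_G$ at which $x$ fails membership, so $x \notin \bigcap_{\mu \in \mathscr M_G} A_\mu$, completing the contrapositive.

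There is no real obstacle: all the substantive work lives in Proposition~\ref{prop:outside-amen} and its preceding lemma, so the only content of the corollary is the observation that $\mathscr M_G$ is closed under the action $\mu \mapsto \mu_g$. Once this invariance is noted, Proposition~\ref{prop:outside-amen} reads directly as the inclusion $\bigcap_{\mu \in \mathscr M_G} A_\mu \subseteq \mathscr A_G$, and combining with the trivial forward direction yields the claimed equality.
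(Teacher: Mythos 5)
Your proof is correct and is exactly the argument the paper intends: the forward inclusion is the hypothesis, and the reverse inclusion is the contrapositive reading of Proposition~\ref{prop:outside-amen} together with the observation (from the preceding lemma) that $\mathscr M_G$ is closed under $\mu\mapsto\mu_g$. The paper leaves the corollary unproved precisely because it follows this immediately, so there is nothing to add.
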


We close by formulating two conjectures. \begin{conjecture}
Let $G$ be a finitely generated group.  Then for any symmetric, aperiodic probability measure $\mu$ on $G$ whose support generates $G$, the subgroup $A_\mu$ contains the amenable radical.
\end{conjecture}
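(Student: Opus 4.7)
The plan is to adapt the Markov-chain argument of Theorem~\ref{thm:rell}, replacing the finite normal subgroup by the full amenable radical $\mathscr A_G$. Writing $N=\mathscr A_G$ and fixing $g_0\in N$, the goal is to show $\mu^{\ast n}(g_0)/\mu^{\ast n}(e)\to 1$. I would first choose a right transversal $T\subseteq G$ for $N$ with $e\in T$, so that each $g\in G$ admits a unique factorisation $g=\eta(g)\tau(g)$ with $\eta(g)\in N$ and $\tau(g)\in T$. A walk $(g_i)$ driven by $\mu$ then decomposes into a quotient walk $(\bar g_i)=(g_iN)$ on $G/N$ governed by the push-forward $\bar\mu$ (which inherits symmetry, aperiodicity, and generation of $G/N$ from $\mu$), together with a fiber process $(\eta(g_i))$ in $N$ obeying the cocycle relation $\eta(g_i)=\eta(g_{i-1})\sigma_i$, where $\sigma_i=\eta(\tau(g_{i-1})x_i)\in N$ depends on $\tau(g_{i-1})$ and on the increment $x_i\sim\mu$. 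The target ratio then becomes $\mathrm{Pr}[\eta(g_n)=g_0,\tau(g_n)=e]/\mathrm{Pr}[\eta(g_n)=e,\tau(g_n)=e]$.

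The next step is to condition on the quotient trajectory $\bar e=\bar g_0,\bar g_1,\dots,\bar g_n=\bar e$. The cocycle increments $\sigma_1,\dots,\sigma_n$ become conditionally independent $N$-valued random variables whose distributions depend on where the quotient walk currently sits, and the fiber value $\eta(g_n)$ is their product. Symmetry of $\mu$ on $G$ transfers, via the cocycle, to a time-reversal symmetry of the conditional law of $\sigma_i$ along any fixed quotient loop. Since $N$ is amenable, I would expect a suitable inhomogeneous analogue of Avez' theorem to apply to this conditional walk on $N$, yielding $\mathrm{Pr}[\eta(g_n)=g_0\mid\bar g_\bullet]/\mathrm{Pr}[\eta(g_n)=e\mid\bar g_\bullet]\to 1$. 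Averaging against the return distribution of the $G/N$-walk would then give $\mu^{\ast n}(g_0)/\mu^{\ast n}(e)\to 1$, as required.

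The hard part is establishing this conditional ratio limit rigorously. The classical Avez theorem treats i.i.d.\ walks on an amenable group, whereas here the increments $\sigma_i$ form a genuinely inhomogeneous sequence whose law is driven by the random $G/N$-trajectory via the conjugation action of $T$ on $N$; moreover, when $N$ is infinite there is no stationary probability measure to appeal to, so the Fundamental Theorem of Markov Chains used in Theorem~\ref{thm:rell} is unavailable. One would need either an Avez-type theorem for random walks in a random environment on an amenable group, or a direct Reiter-style construction producing almost-$N$-invariant unit vectors in $\ell^2(G)$ built from the $\mu^{\ast n}$ themselves. A complementary functional-analytic route is to work directly with $\xi_n=\mu^{\ast n}/\Vert\mu^{\ast n}\Vert_2$ and try to show asymptotic $N$-invariance via spectral analysis of $T_\mu$: Kesten's theorem that $T_\mu$ on $\ell^2(G)$ and $T_{\bar\mu}$ on $\ell^2(G/N)$ have equal spectral radii when $N$ is amenable suggests that the top of the spectrum of $T_\mu$ is captured by $G/N$ and is therefore $N$-invariant in an asymptotic sense; if so, Proposition~\ref{prop:fromL2toMassAtVertex} would immediately give $N\subseteq A_\mu$. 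Converting this spectral-radius equality into the pointwise ratio control demanded by the definition of $A_\mu$ is, however, itself delicate, and appears to be the crux of the problem on either route.
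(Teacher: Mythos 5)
The statement you are addressing is not a theorem of the paper but one of its closing conjectures; the authors offer no proof, and the surrounding text (the free-product example showing $A_\mu$ can fail to be normal and can depend on $\mu$) indicates they regard it as genuinely open. So there is no proof in the paper to compare against, and the question is simply whether your proposal closes the gap. It does not: you have correctly identified the obstruction, but you have not overcome it. Your skew-product decomposition of the walk into a quotient walk on $G/N$ and a fiber cocycle in $N$ is the natural generalisation of the paper's proof of Theorem~\ref{thm:rell}, and it faithfully reproduces that argument's structure. But the engine of that proof is the Fundamental Theorem of Markov Chains applied to a finite state space: the fiber process converges to the uniform stationary distribution on $F$, and the ratio $\kappa^{\ast n}(f)/\kappa^{\ast n}(e)$ is then computed by averaging $\pi$ against the quotient distribution. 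When $N$ is infinite amenable, as you note, there is no stationary probability measure, and the step you label ``a suitable inhomogeneous analogue of Avez' theorem'' is precisely the missing content. Avez's theorem itself is a statement about i.i.d.\ increments; the conditional increments $\sigma_i$ in your setup are independent but not identically distributed (their laws are modulated by the random $G/N$-trajectory through the conjugation action), and no ratio limit theorem for such inhomogeneous products is available. Asserting that one ``would expect'' it to apply is where the proof stops being a proof.

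The second route has the same character. Kesten's theorem does give $\rho(T_\mu)=\rho(T_{\bar\mu})$ when $N$ is amenable and normal, but spectral radius is an $\ell^2$-operator-norm statement, while membership in $A_\mu$ demands pointwise control of $\mu^{\ast 2n}(g)/\mu^{\ast 2n}(e)=\langle\xi_n,g\cdot\xi_n\rangle$, i.e.\ that the normalised distributions $\xi_n$ are asymptotically $N$-invariant. Equality of spectral radii does not by itself force the near-top spectral mass of $T_\mu$ to concentrate on $N$-invariant vectors; extracting that would require a genuinely new argument (some form of unique ergodicity or concentration at the spectral radius), and you correctly flag this as ``delicate'' without supplying it. In short: your proposal is a reasonable research plan that recapitulates why the authors could prove the result for finite normal subgroups but only conjecture it for the amenable radical; the two cruxes you isolate are real, and neither is resolved here.
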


Even more desirable would be the following.
\begin{conjecture}
Let $G$ be a finitely generated group.  Then there exists some $\mu$ such that $A_\mu$ is the amenable radical.
\end{conjecture}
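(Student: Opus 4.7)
The strategy I would try begins by assuming the preceding Conjecture~5.9, so that $\mathscr A_G\subseteq A_\mu$ for every $\mu\in\mathscr M_G$; the task then reduces to producing a single $\mu^\ast\in\mathscr M_G$ for which the reverse inclusion $A_{\mu^\ast}\subseteq\mathscr A_G$ also holds. Since $A_{\mu^\ast}$ is automatically amenable by Theorem~\ref{prop:amenablesubgroup}, it would be enough to arrange that $A_{\mu^\ast}$ is \emph{normal} in $G$, whence amenability forces it into the amenable radical. So the heart of the plan is: build $\mu^\ast$ symmetric and conjugation-invariant enough that $gA_{\mu^\ast}g^{-1}=A_{\mu^\ast}$ for all $g\in G$.

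The natural candidate is to average a chosen $\mu$ over conjugation. If $G$ is ICC, no nontrivial class function is summable, so we cannot simply set $\mu^\ast(x)=c\sum_{g\in G}\mu(g^{-1}xg)$. Instead, I would try a weighted averaging. Fix a symmetric generating multiset $S$ and a starting $\mu\in\mathscr M_G$ supported on $S\cup\{e\}$. Enumerate $G=\{g_1,g_2,\dots\}$ and set
\[
\mu^\ast \;=\; \sum_{i=1}^{\infty} c_i\, \mu_{g_i},
\]
where $c_i>0$, $\sum c_i=1$, and $(c_i)$ decays rapidly. This measure is symmetric, aperiodic, and its support still generates $G$. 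The hope is that the convolution powers $(\mu^\ast)^{\ast n}$ are sufficiently close to each $\mu_{g_i}^{\ast n}$ that $A_{\mu^\ast}\subseteq A_{\mu_{g_i}}=g_iA_\mu g_i^{-1}$ for every $i$, giving $A_{\mu^\ast}\subseteq\bigcap_g gA_\mu g^{-1}$, which is normal and amenable, hence inside $\mathscr A_G$.

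To make that inclusion rigorous, I would translate back to the $\ell^2$ formulation of Proposition~\ref{prop:fromL2toMassAtVertex}: showing $x\notin A_{\mu^\ast}$ amounts to producing a uniform lower bound on $\|x\cdot\xi_n^\ast-\xi_n^\ast\|_2$. For each $i$ with $x\notin g_iA_\mu g_i^{-1}$, Proposition~\ref{prop:outside-amen} supplies such a bound for $\mu_{g_i}$; the task is then to argue that a term $c_i\mu_{g_i}$ contributes enough non-invariance to survive the mixing by the rest of the sum. Here the convolution identity in the proof of Lemma~\ref{lem:productMeasure}, combined with self-adjointness of the convolution operator, should let one expand $(\mu^\ast)^{\ast 2n}(e)$ as a positive combination of cross-terms $\mu_{g_{i_1}}\ast\cdots\ast\mu_{g_{i_{2n}}}(e)$ and compare these to the diagonal terms.

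The main obstacle, and the reason this remains a conjecture, is precisely controlling this comparison: the map $\mu\mapsto A_\mu$ is not obviously continuous or monotone in $\mu$, and mixing two measures can in principle create new invariant vectors that neither had. Concretely, a single conjugate $\mu_{g_i}$ may fail to detect $x\notin\mathscr A_G$ because $x$ happens to lie in $g_iA_\mu g_i^{-1}$; one must therefore argue that \emph{some} $g_i$ in the support contributes, and that its contribution is not drowned out. If a direct analytic estimate proves elusive, a fallback is to replace countable averaging by a diagonal construction: enumerate $x_1,x_2,\ldots\notin\mathscr A_G$, and inductively refine $\mu^\ast$ on larger and larger balls so that $x_n\notin A_{\mu^\ast}$ is witnessed by $\|x_n\cdot\xi_N^\ast-\xi_N^\ast\|_2\geq\tfrac12$ for some $N=N(n)$, taking care that later modifications of $\mu^\ast$ are small enough (in a weighted total variation sense) not to destroy these witnesses.
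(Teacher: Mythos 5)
The statement you are addressing is posed in the paper as an open conjecture; the authors give no proof, and your text is a strategy sketch rather than a proof. You are candid about this, but it is worth pinpointing exactly where the argument is not yet mathematics. First, the whole plan is conditional on the preceding conjecture ($\mathscr A_G\subseteq A_\mu$ for all admissible $\mu$), which is itself unproven, so at best you would obtain a conditional result. Second, the load-bearing step --- that for $\mu^\ast=\sum_i c_i\,\mu_{g_i}$ one has $A_{\mu^\ast}\subseteq\bigcap_i A_{\mu_{g_i}}$ --- is precisely the open problem in disguise. Expanding $(\mu^\ast)^{\ast 2n}$ by bilinearity of convolution produces cross-terms $\mu_{g_{i_1}}\ast\cdots\ast\mu_{g_{i_{2n}}}$ which are not conjugates of $\mu^{\ast 2n}$ and whose ratios at $x$ versus $e$ are not controlled by any result in the paper; the map $\mu\mapsto A_\mu$ has no established monotonicity or continuity, and indeed the paper's own Proposition~\ref{prop:finite-inside-Amu} shows that averaging a measure (there, over a finite subgroup) can strictly \emph{enlarge} $A_\mu$, which is evidence against the inclusion you need rather than for it.

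Two further points would sink the argument even granting the above. Proposition~\ref{prop:outside-amen} is purely qualitative: it asserts $x\notin A_{\mu_g}$ for some $g$, i.e.\ that $\Vert x\cdot\xi_n-\xi_n\Vert_2$ does not tend to $0$ along the $\mu_g$-walk; it supplies no uniform quantitative lower bound that could ``survive the mixing,'' so the comparison you invoke has nothing to start from. And the diagonalization fallback is broken at the level of definitions: membership in $A_\mu$ is a statement about the limit as $n\to\infty$, so a witness $\Vert x_m\cdot\xi_N^\ast-\xi_N^\ast\Vert_2\geq\tfrac12$ at a single finite time $N=N(m)$ certifies nothing --- you would need a lower bound along an infinite subsequence of times for each $x_m$, and preserving infinitely many such bounds under all subsequent modifications of the measure is again the entire difficulty, not a technicality. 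In short, the proposal correctly identifies plausible machinery (conjugated measures, normality of $\bigcap_g gA_\mu g^{-1}$, the $\ell^2$ reformulation) but does not close any of the gaps that make this a conjecture.
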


\section*{acknowledgements}
The authors wish to thank Alain Valette, Sasha Fish,  Vadim Kaimanovich, Narutaka Ozawa, Anne Thomas  and the annonymous reviewer for helpful suggestions, references and feedback.

\bibliography{refs-abbrev} \bibliographystyle{plain}
\end{document}